\theoremstyle{plain}
\newcommand{\dms}{\operatorname{dim}}
\newcommand{\rad}{\operatorname{rad}}
\newcommand{\soc}{\operatorname{soc}}
\newcommand{\Hom}{\operatorname{Hom}}
\newcommand{\End}{\operatorname{End}}
\newcommand{\Ext}{\operatorname{Ext}}
\newcommand{\Irr}{\operatorname{Irr}}
\newcommand{\ind}{\operatorname{ind}}
\newcommand{\modd}{\operatorname{mod}}
\newcommand{\Ima}{\operatorname{Im}}
\newcommand{\Mesh}{\operatorname{Mesh}}
\newcommand{\bk}{\mathbf{k}}
\newcommand{\gldim}{\operatorname{gldim}}
\newtheorem{thm}{Theorem}[subsection]
\newtheorem{cor}[thm]{Corollary}
\newtheorem{lem}[thm]{Lemma}
\newtheorem{exm}[thm]{Example}
\newtheorem*{results*}{Results}
\newtheorem{prop}[thm]{Proposition}
\newtheorem{rmk}[thm]{Remark}
\newtheorem{defn}[thm]{Definition}
\begin{document}
\title[Preprojective algebras of tree-type quivers]{Preprojective algebras of tree-type quivers}

\subjclass[2010]{16G20,16G70}

\keywords{preprojective algebras, quivers, tree, standard, derived category}

\author{Van C.~Nguyen}
\address{Department of Mathematics\\Northeastern University\\Boston, MA 02115}
\email{v.nguyen@northeastern.edu} 

\author{Gordana Todorov}
\email{g.todorov@northeastern.edu} 

\author{Shijie Zhu}
\email{zhu.shi@husky.neu.edu} 


\maketitle

\begin{abstract}
Let $Q$ be a tree-type quiver, $\bk Q$ its path algebra, and $\lambda$ a nonzero element in the field $\bk$. We construct irreducible morphisms in the Auslander-Reiten quiver of the transjective component of the bounded derived category of $\bk Q$ that satisfy what we call the $\lambda$-relations. When $\lambda=1$, the relations are known as mesh relations. When $\lambda=-1$, they are known as commutativity relations. Using this technique together with the results given by Baer-Geigle-Lenzing, Crawley-Boevey, Ringel, and others, we show that for any tree-type quiver, several descriptions of its preprojective algebra are equivalent. 
\end{abstract}


\section{Introduction}

Let $\bk$ be a field. All considered algebras are over $\bk$ and all modules are left modules. For a $\bk$-algebra $A$, denote by $A$-$\modd$ the category of all finitely presented $A$-modules. 

Let $Q=(Q_0,Q_1)$ be a finite quiver, where $Q_0$ is the set of vertices and $Q_1$ is the set of arrows. Let $\bk Q$ be the path algebra of $Q$ with coefficient field $\bk$. We use the following convention to compose paths: suppose $p$ is a path from $a$ to $b$ and $q$ is a path from $c$ to $d$, then $qp$ is a path from $a$ to $d$ given by concatenation if $b=c$, and $qp=0$ if $b\neq  c$. Let $e_a \in \bk Q$ be the trivial path at vertex $a$.

The preprojective algebra of $Q$ is widely studied in many fields in mathematics. Gelfand and Ponomarev in \cite{GP} were originally interested in constructing an algebra $A$ with the following property (*): $A$ contains $\bk Q$ as a subalgebra and when considered as a left $\bk Q$-module, $A$ decomposes as a direct sum of the indecomposable ``preprojective" $\bk Q$-modules, one from each isomorphism class. Hence, the name ``preprojective algebra" naturally comes from this property. However, it was emphasized by Ringel \cite{R2} that for a fixed quiver $Q$, there may be several isomorphism classes of algebras A with property (*). 

There are several definitions of the preprojective algebra of a given quiver $Q$ in various contexts: it is defined as the quotient $\Pi$ of the path algebra of the double quiver $\overline{Q}$ modulo the relation $\sum_{\alpha \in Q_1} (\alpha^* \alpha-\alpha\alpha^*)$ (c.f.~Definition~\ref{defn:quiver}); it is also defined by Baer-Geigle-Lenzing as the orbit algebra $\Sigma':= \bigoplus_{i\geq0} \Hom_{\bk Q}(\bk Q,\tau^{-i}\bk Q)$, where $\tau^- = TrD$ (c.f.~Definition~\ref{defn:BGL}). Dually one can consider the algebra $\Sigma:= \bigoplus_{i\geq0} \Hom_{D^b(\bk Q)}(\tau^i \bk Q,\bk Q)$ (c.f.~Definition~\ref{defn:sigma}). Furthermore, it is proved by Baer-Geigle-Lenzing in \cite[Proposition 3.1]{BGL} that the preprojective algebra $\Sigma'$ is isomorphic to the $\Ext$-tensor algebra $\bk Q \langle \Omega \rangle$ of a $\bk Q$-bimodule $\Omega:= \Ext^1_{\bk Q}(D(\bk Q), \bk Q)$ (c.f.~Definition~\ref{defn:tensor alg}).

Using Morita equivalence and forgetful functors to $\bk$-$\modd$, Ringel \cite[Theorem A]{R2} proved that if $Q$ is an acyclic quiver (i.e. $Q$ has no oriented cycles), then the preprojective algebra $\Pi$ of $Q$ is isomorphic to the $\Ext$-tensor algebra $\bk Q \langle \Omega \rangle$. On the other hand, using a different approach related to the deformed preprojective algebras introduced by Crawley-Boevey and Holland in \cite{CBH}, Crawley-Boevey gave another proof for $\Pi \simeq \bk Q \langle \Omega \rangle$ in \cite[Theorems 2.3 and 3.1]{CB}. 

In this paper, for any tree-type quiver $Q$ (i.e. the underlying graph of $Q$ is a tree), we construct explicit algebra isomorphisms between $\Pi$ and $\Sigma$ in Theorem~\ref{thm:alg isom}, between $\Pi$ and $\Sigma'$ in Theorem~\ref{thm:alg isom2}, and between $\Sigma$ and $\Sigma'$ in Corollary~\ref{cor:dual}. Our results together with \cite[Proposition 3.1]{BGL} can be combined to give a description of the isomorphism $\Pi \simeq \bk Q \langle \Omega \rangle$ whose existence was already proved independently by Ringel and Crawley-Boevey.  As a consequence, we show that these definitions of the preprojective algebra of a tree-type quiver $Q$ are equivalent. 

\begin{results*}
Let $Q$ be a tree-type quiver. The followings are isomorphic as algebras: 
$$
\xymatrixcolsep{6.5pc} \xymatrix{\Sigma'=\bigoplus_{i\geq0} \Hom_{\bk Q}(\bk Q,\tau^{-i}\bk Q) \ar[r]^{ \ \ \ \ \ \ \ \ \ \ \  \simeq, \text{ \cite{BGL}}} \ar@{<..>}[d]_{\simeq, \text{ Cor.\ref{cor:dual}}}^{\delta} & \bk Q \langle \Omega \rangle \ar[d]^{\simeq, \text{ \cite{CB,R2}}} \\
\Sigma=\bigoplus_{i\geq0} \Hom_{D^b(\bk Q)}(\tau^i \bk Q,\bk Q) \ar@{<..>}^{\simeq, \text{ Thm.\ref{thm:alg isom}}}_{\eta}[r] & \Pi=\bk\overline{Q}/(\sum_{\alpha \in Q_1} (\alpha^* \alpha-\alpha\alpha^*)).
}
$$
\end{results*}

Our paper is organized as follows. In Section~\ref{sec:prelim}, we recall some background material on the Auslander-Reiten theory and the mesh category. 

In Section~\ref {sec:irred}, we examine how one could fill irreducible morphisms in the Auslander-Reiten quiver of the transjective component of the bounded derived category of $\bk Q$ such that those morphisms satisfy certain relations. In \ref{subsec:filling}, for a tree-type quiver $Q$ and a nonzero $\lambda \in \bk$, we construct irreducible morphisms satisfying the so-called $\lambda$-relations. When $\lambda=-1$, these relations are known as the commutativity relations. This plays an important role in defining the isomorphism in main Theorem~\ref{thm:alg isom}. Nonetheless, we also point out that this technique does not work for some non-tree quivers in \ref{subsec:non-tree}. 

In Section~\ref{sec:preprojective}, we focus on various descriptions of the preprojective algebras of $Q$. In \ref{subsec:preprojective}, we give details of such descriptions, in particular, the definitions of the algebras $\Pi, \Sigma', \Sigma$, and $\bk Q \langle \Omega \rangle$. In \ref{subsec:alg isom}, for any tree-type quiver $Q$, we construct explicit algebra isomorphisms $\eta: \Sigma \rightarrow \Pi$ and $\delta: \Sigma \rightarrow \Sigma'$ to complete the above diagram. 


As mentioned above, Gelfand and Ponomarev \cite{GP} gave a construction of preprojective algebras of quivers $Q$ using the relation $\sum_{\alpha \in Q_1} (\alpha^* \alpha-\alpha\alpha^*)$. Dlab and Ringel \cite{DR} extended this construction to preprojective algebras of modulated graphs. On the other hand, it is interesting to note that one can also define an algebra using the relation $\sum_{\alpha \in Q_1} (\alpha^* \alpha+\alpha\alpha^*)$. In general, as pointed out by Gabriel in \cite{G}, this algebra may not be isomorphic to the algebra $\Pi$ defined by quiver and relations as above. However, in the case when $Q$ is a tree-type quiver, they are isomorphic, \cite[\S 6]{R2}. Preprojective algebras with relations defined with plus signs, their deformations and relation to Lusztig algebras, were studied in \cite{MOV} mostly from a geometric point of view. For the results of this paper, preprojective algebras with relations given by minus signs are more relevant.

\vskip10pt
 
\textbf{Acknowledgement:} The authors thank Bill Crawley-Boevey for insightful discussions and for pointing out some relevant results on preprojective algebras.


\section{Preliminaries}
\label{sec:prelim}


\subsection{Almost split sequences}
\label{subsec:almost}

As we are working over both the module category $\bk Q$-$\modd$ and the bounded derived category $D^b(\bk Q)$, whenever we refer to ``almost split sequence,'' it means almost split sequence in the context of abelian category and almost split triangle in the context of triangulated category.  

Due to Auslander and Reiten, we have the following description of almost split sequences starting from an indecomposable projective non-injective module of a hereditary algebra.
\begin{prop} \cite[Proposition 2.4]{AR5} 
\label{ass AR}
Let $H$ be a hereditary algebra and $P$ be an indecomposable projective noninjective $H$-module. Denote $P^*=\Hom_H(P,H)$ which is an $H^{op}$-module and $\rad P$ the radical of $P$. Then we have an almost split sequence 
$$
\xymatrix{0\ar[r]&P\ar[r]&(\rad P^*)^*\oplus\tau^-\rad P\ar[r]&\tau^-P\ar[r]&0},
$$
where $\tau^-=TrD$ is the inverse of the Auslander-Reiten translation, and the morphism 
$$(\rad P^*)^*\longrightarrow \tau^-P/\Ima (\tau^-\rad P\rightarrow \tau^-P)$$ is a projective cover. 
\end{prop}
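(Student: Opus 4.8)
The existence of an almost split sequence $0\to P\to E\to\tau^- P\to 0$ is automatic from general Auslander--Reiten theory, since $P$ is indecomposable and noninjective; the entire content of the statement is the explicit identification of the middle term $E$ and of the induced map onto $\tau^- P$. The plan is therefore to take this abstract sequence and determine the indecomposable summands of $E$ together with the irreducible maps leaving $P$. To set up the bookkeeping I would first exploit that $H$ is hereditary, so every submodule of a projective is again projective. In particular $\rad P$ is projective, and since $P$ is projective the $H^{op}$-module $P^*=\Hom_H(P,H)$ is projective, whence its radical $\rad P^*$ is projective and $(\rad P^*)^*$ is a projective left $H$-module. This already singles out $(\rad P^*)^*$ as the natural candidate for the projective part of $E$ and $\tau^-\rad P$ as the nonprojective part.

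Next I would split $E=E_p\oplus E_n$ into its projective and nonprojective summands and identify each separately. For the nonprojective part I would use the standard reflection in the Auslander--Reiten quiver: an irreducible map $X\to P$ with $X$ noninjective yields an irreducible map $P\to\tau^-X$, and conversely. Since $P$ is projective, the inclusion $\rad P\hookrightarrow P$ is minimal right almost split, so the indecomposable summands of $\rad P$ are exactly the sources of arrows into $P$; applying the reflection gives $E_n\cong\tau^-\rad P$. For the projective part, an arrow $P\to P''$ into an indecomposable projective $P''$ occurs precisely when $P$ is a summand of $\rad P''$. Here I would invoke the duality $(-)^*=\Hom(-,H)$, which reverses arrows and interchanges indecomposable left and right projectives: it matches the multiplicity of $P$ inside $\rad P''$ with the multiplicity of the corresponding projective summand of $\rad P^*$. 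Summing over all such $P''$ and dualizing back yields $E_p\cong(\rad P^*)^*$.

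It remains to pin down the maps and the projective cover assertion. I would construct $\alpha\colon P\to(\rad P^*)^*$ by dualizing the inclusion $\rad P^*\hookrightarrow P^*$ and using reflexivity $P^{**}\cong P$; over the hereditary algebra $H$ the relevant $\Ext^1$ groups control its cokernel. Together with the map $P\to\tau^-\rad P$ induced by the radical inclusion, these assemble into the left-hand map of the sequence, and one checks that the combined map is left almost split, so by uniqueness of almost split sequences it must be the one in question. The projective cover statement then follows from minimality: the induced map $(\rad P^*)^*\to\tau^- P/\Ima(\tau^-\rad P\to\tau^- P)$ is surjective and induces an isomorphism on tops, hence is a projective cover. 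The \emph{main obstacle} is the precise identification $E_p\cong(\rad P^*)^*$, namely translating ``arrows $P\to P''$ into projectives'' into the radical structure of $P^*$ through the duality while simultaneously controlling exactness and minimality; this duality bookkeeping — reflexivity of projectives, reversal of arrows, and vanishing of the relevant $\Ext^1$ over the hereditary algebra — is where the real work concentrates.
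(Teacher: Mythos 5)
The paper contains no proof of this proposition at all --- it is quoted from \cite[Proposition 2.4]{AR5} --- so your attempt can only be measured against Auslander--Reiten's original argument, which is constructive (they build the sequence and verify almost-splitness directly). Your plan runs in the opposite direction: start from the abstract existence of an almost split sequence $0\to P\to E\to\tau^-P\to 0$ and identify the summands of $E$, the nonprojective ones by the reflection $X\mapsto\tau^-X$ applied to the summands of $\rad P$ (all noninjective, since an injective summand of $\rad P$ would split off the indecomposable $P$), the projective ones through the duality $(-)^*$. That outline is legitimate and the identification of $E$ can be carried out this way. The genuine gap is the projective-cover clause, which is exactly the part of the statement that does \emph{not} follow from knowing $E$ up to isomorphism, and which you dispatch with ``follows from minimality: \dots induces an isomorphism on tops.'' That is the assertion to be proved, not an argument, and minimality is not the operative ingredient. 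What does work: write the sequence as $0\to P\xrightarrow{(a,b)^T} Q\oplus N\xrightarrow{(u,v)}\tau^-P\to 0$ with $Q=(\rad P^*)^*$ and $N=\tau^-\rad P$. Exactness gives $\Ker\bigl(\bar u\colon Q\to\tau^-P/\Ima v\bigr)=a(P)$: if $u(q)=v(n)$ then $(q,-n)\in\Ker(u,v)=\Ima(a,b)^T$, so $q\in a(P)$. Moreover $a(P)\subseteq\rad Q$: each component $a_i\colon P\to Q_i$ of a minimal left almost split map is irreducible, and if $a_i(P)\not\subseteq\rad Q_i$ then $a_i(P)+\rad Q_i=Q_i$ forces $a_i$ surjective by Nakayama's lemma, hence a split epimorphism onto the projective $Q_i$, hence an isomorphism because $P$ is indecomposable --- contradicting irreducibility. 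Surjectivity of $\bar u$ together with $\Ker\bar u\subseteq\rad Q$ is precisely the projective-cover property.

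A second, smaller error sits in your duality step: you propose to match ``the multiplicity of $P$ inside $\rad P''$'' with the multiplicity of ${P''}^*$ inside $\rad(P^*)$. These are the two valuations $d'_{P,P''}=\dim_{k_P}\Irr(P,P'')$ and $d_{P,P''}=\dim_{k_{P''}}\Irr(P,P'')$ of the arrow $P\to P''$, and over a general hereditary artin algebra (the generality of \cite{AR5}) they can differ. What you actually need --- and what the duality does give, since $(-)^*$ identifies $\Irr(P,P'')$ with $\Irr({P''}^*,P^*)$ while exchanging the two division-ring actions --- is that the multiplicity of $P''$ in $E$, computed from the minimal \emph{left} almost split map out of $P$, equals the multiplicity of ${P''}^*$ in $\rad(P^*)$, computed from the minimal \emph{right} almost split map into $P^*$. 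For the simply laced quivers of this paper the two numbers coincide, so the slip is invisible there, but as a proof of the cited statement it must be repaired. Finally, your paragraph constructing the maps and ``checking that the combined map is left almost split'' should simply be deleted: once $E$ is identified, the maps are those of the almost split sequence, and genuinely verifying almost-splitness of a hand-built map is the hard content of Auslander--Reiten's own proof, which your route is designed to bypass.
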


Suppose $Q=(Q_0,Q_1)$ is an acyclic quiver and $P_j=\bk Qe_j$ is the indecomposable projective $\bk Q$ module corresponding to the vertex $j$. Then $\rad P_j\simeq \mathop{\bigoplus}\limits_{(j\rightarrow i) \in Q_1}P_i$ and $(\rad P_j^*)^*\simeq\mathop{\bigoplus}\limits_{(k\rightarrow j) \in Q_1}P_k$. Hence using Proposition $\ref{ass AR}$,  we have the following description of almost split sequences in $\bk Q$-$\modd$.

\begin{lem}
In $\bk Q$-$\modd$, the almost split sequence starting from any indecomposable projective module $P_j$ is:
$$
\xymatrix{0\ar[r]&P_j\ar[r]&\mathop{\bigoplus}\limits_{(k\rightarrow j) \in Q_1}P_k \oplus \mathop{\bigoplus}\limits_{(j\rightarrow i) \in Q_1} \tau^-P_i\ar[r]& \tau^-P_j\ar[r]&0.}
$$ 
The almost split sequence starting from any indecomposable preprojective module $\tau^{-n}P_j$, for $n\geq 0$, is:
$$
\xymatrix{0\ar[r]&\tau^{-n}P_j\ar[r]&\mathop{\bigoplus}\limits_{(k\rightarrow j) \in Q_1}\tau^{-n}P_k \oplus \mathop{\bigoplus}\limits_{(j\rightarrow i) \in Q_1} \tau^{-(n+1)}P_i\ar[r]& \tau^{-(n+1)}P_j\ar[r]&0.}
$$ 
\end{lem}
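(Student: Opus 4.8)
The plan is to read the two displayed sequences as the base case ($n=0$) and the general case of a single induction on $n$, with the base case coming essentially for free from Proposition~\ref{ass AR}.

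First I would establish the $n=0$ sequence directly. Applying Proposition~\ref{ass AR} to the hereditary algebra $H=\bk Q$ and the indecomposable projective $P=P_j$ produces the almost split sequence $0\to P_j\to (\rad P_j^*)^*\oplus\tau^-\rad P_j\to \tau^-P_j\to 0$. It then only remains to rewrite the middle term via the two module isomorphisms recorded just above the statement, namely $\rad P_j\simeq\bigoplus_{(j\to i)\in Q_1}P_i$ and $(\rad P_j^*)^*\simeq\bigoplus_{(k\to j)\in Q_1}P_k$. Since $\tau^-$ is additive it commutes with finite direct sums, so $\tau^-\rad P_j\simeq\bigoplus_{(j\to i)\in Q_1}\tau^-P_i$; substituting these identifications reproduces the first sequence verbatim.

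For the inductive step I would apply $\tau^-$ to the sequence already obtained for a given $n$. The key input is that on the transjective (preprojective) component $\tau^-$ is an autoequivalence, so it carries almost split sequences to almost split sequences and commutes with finite direct sums. Applying $\tau^-$ to $0\to\tau^{-n}P_j\to\bigoplus_{(k\to j)\in Q_1}\tau^{-n}P_k\oplus\bigoplus_{(j\to i)\in Q_1}\tau^{-(n+1)}P_i\to\tau^{-(n+1)}P_j\to 0$ and using $\tau^-\circ\tau^{-m}=\tau^{-(m+1)}$ term by term yields precisely the sequence indexed by $n+1$, closing the induction.

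The main obstacle I anticipate is the behavior of $\tau^-$ with respect to exactness and injective summands: in $\bk Q$-$\modd$ the functor $\tau^-$ is not exact and annihilates injectives, so naively applying it to a short exact sequence need not yield a short exact sequence, and should some $\tau^{-n}P_j$ be injective the target $\tau^{-(n+1)}P_j$ would vanish. I expect the clean way around this to be to interpret all of the sequences as almost split triangles in $D^b(\bk Q)$, where $\tau^-\cong\nu^{-1}[1]$ is a genuine autoequivalence that never annihilates an indecomposable and preserves almost split triangles, and under which the transjective component is closed. Once this framework is fixed, the remaining work is the routine bookkeeping of distributing $\tau^{-n}$ over the sets of arrows into and out of $j$.
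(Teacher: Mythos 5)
Your proposal is correct, and for the first sequence it coincides exactly with the paper's argument: the paper also just applies Proposition~\ref{ass AR} to $P=P_j$ and substitutes the identifications $\rad P_j\simeq\bigoplus_{(j\to i)\in Q_1}P_i$ and $(\rad P_j^*)^*\simeq\bigoplus_{(k\to j)\in Q_1}P_k$, using additivity of $\tau^-$. For the second sequence the paper offers no explicit argument at all -- it is presented as an immediate consequence of the same computation, with the application of $\tau^{-n}$ left tacit -- so your inductive step is genuinely added content, and you are right to flag the obstruction (in $\bk Q$-$\modd$ the functor $\tau^-$ is neither exact nor faithful on injectives, so one cannot naively push a short exact sequence forward). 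Your fix, passing to $D^b(\bk Q)$ where $\tau^-\simeq\nu^{-1}[1]$ is an autoequivalence preserving Auslander-Reiten triangles, is sound and is consistent with the machinery the paper itself sets up later in Section~\ref{subsec:derived cat}. The one step you should make explicit is the descent back to the module category: by Happel's description (case $(1)$ in Section~\ref{subsec:derived cat}), when $\tau^{-n}P_j$ is a non-injective module the almost split sequence in $\bk Q$-$\modd$ starting at it induces the (unique) Auslander-Reiten triangle ending at $\tau^{-(n+1)}P_j$, so comparing it with the triangle you produced by applying $\tau^{-n}$ identifies the two; in particular all middle summands are then honest modules, on which the derived and module-theoretic $\tau^-$ agree, and this comparison also shows that no degenerate terms (injectives being annihilated) can arise whenever the module-level sequence exists. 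An alternative that stays entirely inside $\bk Q$-$\modd$, closer in spirit to what the paper presumably intends, is to note that $\tau^-=\Ext^1_{\bk Q}(D(\bk Q),-)$ is right exact and that $\Hom_{\bk Q}(D(\bk Q),M)=0$ for any $M$ without injective summands (images of injectives are injective over a hereditary algebra), so $\tau^-$ is exact on the relevant sequences and preserves almost splitness on the preprojective component; either route closes the induction.
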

Closely related to almost split sequences are \textbf{irreducible morphisms}, which are by definition, noninvertible morphisms which have no proper factorization. Since a morphism  $f: X\rightarrow Y$  (with either $X$ or $Y$ indecomposable) is irreducible  if and only if it can be completed into a left or right minimal almost split morphism \cite[Theorem 2.4]{AR4}, we can describe the irreducible morphisms in the preprojective component of $\bk Q$-$\modd$ as:


\begin{lem}\label{irr}
Let $Q=(Q_0,Q_1)$ be an acyclic quiver. Let $X$ and $Y$ be indecomposable preprojective modules over the path algebra $\bk Q$. Let $f: X\rightarrow Y$ be a non-zero morphism. Then $f$ is an irreducible morphism if and only if one of the following holds: \\
$(1)$ $X\simeq \tau^{-n} P_i$, $Y\simeq \tau^{-n} P_j$, $n\geq0$  and there is an arrow $j\rightarrow i$ in $Q$.\\
$(2)$ $X\simeq \tau^{-n} P_i$, $Y\simeq \tau^{-(n+1)} P_j$, $n\geq0$ and there is and arrow $i\rightarrow j$ in $Q$.
\end{lem}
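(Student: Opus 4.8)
The plan is to read the irreducible morphisms directly off the almost split sequences, using the standard criterion that a non-invertible morphism $f:X\to Y$ between indecomposables is irreducible if and only if it occurs as a component of a minimal left (equivalently, right) almost split morphism \cite[Theorem 2.4]{AR4}. Since every indecomposable preprojective $\bk Q$-module has the form $\tau^{-n}P_i$ for some $i\in Q_0$ and $n\geq 0$, I would fix such a presentation $X\cong\tau^{-n}P_i$ throughout.

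For the ``only if'' direction, suppose $f$ is irreducible. By \cite[Theorem 2.4]{AR4} it is a component of the minimal left almost split morphism out of $X$, which, by the description of almost split sequences following Proposition~\ref{ass AR}, is the central map of
$$
0\to \tau^{-n}P_i \to \bigoplus_{(k\to i)\in Q_1}\tau^{-n}P_k \ \oplus\ \bigoplus_{(i\to k)\in Q_1}\tau^{-(n+1)}P_k \to \tau^{-(n+1)}P_i\to 0.
$$
Therefore $Y$ is isomorphic to an indecomposable summand of the middle term: either $Y\cong\tau^{-n}P_j$ with an arrow $j\to i$, which is case $(1)$, or $Y\cong\tau^{-(n+1)}P_j$ with an arrow $i\to j$, which is case $(2)$. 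Conversely, if $(1)$ or $(2)$ holds then $Y$ is such a summand, and the corresponding component $X\to Y$ of the minimal left almost split morphism is irreducible, so an irreducible morphism $X\to Y$ does exist.

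The delicate point---and the one I expect to be the main obstacle---is upgrading this to the stated biconditional for an \emph{arbitrary} nonzero $f$, i.e.\ showing that whenever $(1)$ or $(2)$ holds every nonzero morphism $X\to Y$ is irreducible. Since $X\not\cong Y$ we have $\Hom(X,Y)=\rad(X,Y)$, so it suffices to prove $\rad^2(X,Y)=0$; then $\rad(X,Y)=\Irr(X,Y)$ and every nonzero map is irreducible. I would establish $\rad^2(X,Y)=0$ from the directedness of the preprojective component: a nonzero element of $\rad^2(X,Y)$ would factor through an indecomposable $Z$ lying strictly between $X$ and $Y$ on a path of the Auslander--Reiten quiver, and by tracking the $\tau$-shifts this forces a configuration in $Q$ consisting of an arrow $j\to i$ together with a separate path $j\to k\to i$ carrying a nonzero composite.

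Ruling out such intermediate $Z$ is exactly the crux, and it is where any hypothesis beyond bare acyclicity enters. For a tree-type quiver no arrow can run parallel to a longer path, so the obstruction cannot occur and $\rad^2(X,Y)=0$ follows. By contrast, for a general acyclic quiver the two phenomena can coexist---an arrow $j\to i$ together with a path $j\to k\to i$ yields a nonzero composite $\tau^{-n}P_i\to\tau^{-n}P_k\to\tau^{-n}P_j$ that lies in $\rad^2$ and hence is \emph{not} irreducible---so I would be careful to invoke the tree (or at least the ``parallel-detour-free'') structure precisely at this step, and only then conclude that every nonzero $f$ satisfying $(1)$ or $(2)$ is irreducible.
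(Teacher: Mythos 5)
Your first two paragraphs are precisely the paper's own proof: the paper derives Lemma~\ref{irr} in one line from the fact that a morphism between indecomposables is irreducible if and only if it can be completed to a minimal left or right almost split morphism \cite[Theorem 2.4]{AR4}, combined with the shape of the almost split sequences obtained from Proposition~\ref{ass AR}. So on the ``only if'' direction, and on the existence of an irreducible morphism $X\to Y$ in cases $(1)$ and $(2)$, your argument and the paper's coincide.

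The scruple in your last two paragraphs is not a defect of your proof but a genuine issue with the statement itself, and you have diagnosed it correctly. The biconditional as written --- \emph{every} nonzero $f$ satisfying $(1)$ or $(2)$ is irreducible --- requires $\rad^2(X,Y)=0$, and bare acyclicity does not provide this. Your parallel-path counterexample is in fact realized by the paper's own Example~\ref{ex:non-tree}: for the quiver with arrows $3\to 2$, $2\to 1$, $3\to 1$, the composite $P_1\to P_2\to P_3$ is a nonzero morphism satisfying condition $(1)$ (take $n=0$ and the arrow $3\to 1$), yet it lies in $\rad^2(P_1,P_3)$ and so is not irreducible. Under the tree hypothesis the strong form does hold, exactly as you argue: in case $(1)$, $\Hom(\tau^{-n}P_i,\tau^{-n}P_j)\simeq\Hom(P_i,P_j)\simeq e_i\,\bk Q\,e_j$ is one-dimensional because a tree admits at most one path from $j$ to $i$; in case $(2)$ your directedness argument applies, since arrows in the preprojective component never decrease the $\tau^{-}$-exponent, so an AR-quiver path of length at least two between the relevant vertices would project to a second walk joining $i$ and $j$ in $Q$, impossible in a tree; hence $\rad^2(X,Y)=0$ in both cases. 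Note, finally, that the paper silently uses this strong form later: Lemma~\ref{cij lem} concludes $u_{ij}=c_{ij}\tau f_{ij}$ exactly, not merely modulo $\rad^2$, from $\dim_\bk\Irr(\tau P_i,\tau P_j)=1$, which is legitimate only because $\rad^2(\tau P_i,\tau P_j)=0$ for tree-type $Q$. So your proof is correct, subsumes what the paper actually proves, and correctly isolates the hypothesis (tree-type, or at least absence of parallel paths, rather than mere acyclicity) needed for the lemma as stated; alternatively the conclusion should be weakened to the existence of an irreducible morphism $X\to Y$.
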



\subsection{Auslander-Reiten quiver and standard components}
\label{subsec:AR quiver}

We recall here some properties of the mesh category and of the Auslander-Reiten quiver (AR-quiver) for any $\Hom$-finite Krull-Schmidt additive $\bk$-category $\mathcal C$; for more details see for example~\cite{ASS,L,LP,R}. Auslander-Reiten quiver was originally introduced by Bautista, Riedtmann and Ringel. As we work in both categories $\bk Q$-$\modd$ and $D^b(\bk Q)$, we will need the generalized notions of almost split sequences (also called Auslander-Reiten sequences) and the Auslander-Reiten quiver in a Krull-Schmidt category, as introduced by Liu in \cite{L}. This unifies the notion of an almost split sequence in an abelian category and that of an Auslander-Reiten triangle in a triangulated category. 

\begin{defn}
Given a quiver $Q$, its {\bf path category} is an additive category, whose objects are finite direct sums of indecomposable objects. The indecomposable objects are the vertices of $Q$, and given two indecomposable objects $a$ and $b$, the set of morphisms from $a$ to $b$ is the $\bk$-vector space whose basis is the set of all paths from $a$ to $b$.
\end{defn}

Let $\Gamma=(\Gamma_0, \Gamma_1)$ be a \textbf{locally finite quiver}, that is, for every vertex in $\Gamma$, the number of arrows going in and out of that vertex is finite. A {\bf translation quiver} $\Gamma=(\Gamma_0,\Gamma_1, t)$ is a locally finite quiver $(\Gamma_0, \Gamma_1)$  together with an injective map $t: \Gamma'_0\rightarrow \Gamma_0$ defined on a subset $\Gamma_0' \subseteq \Gamma_0$ such that for any $z\in \Gamma'_0$ and any $y\in \Gamma_0$ the number of arrows from $y$ to $z$ is the same as the number of arrows from $t(z)$ to $y$. The vertices in $\Gamma_0 \setminus \Gamma'_0$ are called {\bf projective}.
Given a translation quiver $\Gamma=(\Gamma_0, \Gamma_1, t)$, let $\Gamma_1' := \{\alpha: a\rightarrow b\ |\ b \in \Gamma'_0 \} \subseteq \Gamma_1$. A {\bf polarization} of $\Gamma$ is an injective map $\sigma: \Gamma_1' \rightarrow \Gamma_1$, such that $\sigma (\alpha): t(b) \rightarrow a$. In the path category of $(\Gamma_0, \Gamma_1)$, 
define the {\bf mesh ideal} as the ideal generated by elements
$$
m_z=\mathop{\sum}\limits_{(\alpha:\, x \rightarrow z)}\alpha\sigma(\alpha).
$$

\begin{defn}\label{mesh def}
The {\bf mesh category} $\Mesh(\Gamma, \sigma)$ of a translation quiver $\Gamma$ with a polarization $\sigma$ is defined as the path category of $\Gamma$ modulo the mesh ideal.
\end{defn}

Let $\bk$ be an arbitrary field and $\mathcal C$ be a $\Hom$-finite, Krull-Schmidt additive $\bk$-category. For each indecomposable object $X$ in $\mathcal C$, we define $k_X:=\End(X)/\rad(X,X)$ to be the automorphism field of $X$. For indecomposable objects $X$ and $Y$, we define $\Irr(X,Y):= \rad(X,Y)/\rad^2(X,Y)$, 
 which is generally referred to as the bimodule of irreducible morphisms from $X$ to $Y$.
 Let $d'_{XY}:=\dim_{k_X}\Irr(X,Y)$ and $d_{XY}:=\dim_{k_Y}\Irr(X,Y)$.

We recall here the general definition of the Auslander-Reiten quiver. However, for this paper, we will only consider the non-valued quiver (trivial valuation). 
\begin{defn}\label{AR quiver def}
The {\bf Auslander-Reiten quiver} $\Gamma_{\mathcal C}$ of $\mathcal C$ is defined to be a valued quiver with vertex set being a complete set of the representatives of the isomorphism classes of indecomposable objects in $\mathcal C$. For vertices $X$ and $Y$, we draw a unique valued arrow $X\rightarrow Y$ with valuation $(d_{XY}, d'_{XY})$ if and only if $d_{XY}>0$. 
\end{defn}

It is proved by Liu \cite[Proposition 2.1]{L} that $\Gamma_{\mathcal C}$ is a valued translation quiver with the translation given by: $\tau Z = X$ if and only if $\mathcal C$ has an almost split sequence $X \rightarrow Y \rightarrow Z$. When the valuation of the translation quiver is symmetric, that is $d_{XY}=d_{XY}'$, $\Gamma_{\mathcal C}$ is modified in such a way that each symmetrically valued arrow is replaced by $d_{XY}$ unvalued arrows from $X$ to $Y$. Therefore, in general, when the valuation of the translation quiver is symmetric,  we can define the mesh category as in Definition~\ref{mesh def}. For a $\Hom$-finite, Krull-Schmidt additive $\bk$-category $\mathcal C$, if we assume its Auslander-Reiten quiver $\Gamma_{\mathcal C}$ has a symmetric valuation, we can define the mesh category of $\Gamma_{\mathcal C}$. 

In the case $\mathcal C=\bk Q$-$\modd$, where $Q$ is a tree, we have $d'_{XY}=d_{XY}=1$ for all indecomposable modules $X$ and $Y$. So the valuation is trivial and $\Gamma_{\mathcal C}$ is a non-valued quiver, also called simply laced.

\begin{rmk}
In case quiver $\Gamma$ does not have multiple arrows, there is a unique polarization $\sigma$. In this case, we can denote the mesh category $\Mesh(\Gamma,\sigma)$ briefly as $\Mesh(\Gamma)$. An example of such a quiver is the AR-quiver $\Gamma$ of $\bk Q$-$\modd$, when $Q$ is a tree-type quiver. In this case, the AR-quiver $\Gamma$ of $\bk Q$-$\modd$ is a translation quiver with translation $t$ being the Auslander-Reiten translation $\tau$ and $\Mesh(\Gamma)$ is the corresponding mesh category. 
\end{rmk}

Recall that in our definition of the AR-quiver $\Gamma_\mathcal C$, for a $\Hom$-finite, Krull-Schmidt additive $\bk$-category $\mathcal C$, the vertices of $\Gamma_\mathcal C$ are \emph{representatives} of isomorphism classes of indecomposable objects of $\mathcal C$. So the indecomposable objects of the path category of $\Gamma_\mathcal C$ 
are \emph{representatives} of isomorphism classes of indecomposable objects in $\mathcal C$. We are interested in the case when the mesh category $\Mesh(\Gamma_\mathcal C)$ is equivalent to a subcategory of $\mathcal C$ consisting of objects in $\Mesh(\Gamma_\mathcal C)$. We will see in Theorem~\ref{LP2.2} below that it holds for the subcategory of preprojective and preinjective modules of $\bk Q$-$\modd$ for a strongly locally finite quiver $Q$. Recall that a quiver $Q$ is \textbf{strongly locally finite} if it is locally finite and interval finite (the number of paths between any two given vertices is finite). In particular, any finite quiver is strongly locally finite. We will also see in Theorem ~\ref{LP2.3} that the same result also holds for the transjective component of the bounded derived category $D^b(\bk Q)$.

\begin{defn}
\label{standard}
Let $\Gamma_\mathcal C$ be the AR-quiver for a $\Hom$-finite, Krull-Schmidt additive $\bk$-category $\mathcal C$. Let $\Gamma'$ be a convex subquiver of $\Gamma_\mathcal C$, that is, every path from a vertex in $\Gamma'$ to a vertex in $\Gamma'$ lies in $\Gamma'$. Let $\mathcal C_{\Gamma'}$ be the full subcategory of $\mathcal C$ consisting of objects in the path category of $\Gamma'$. We say that $\Gamma'$ is {\bf standard} provided that every indecomposable object $X$ in $\Gamma'$ has automorphism field $k_X \cong \bk$, and there exists a $\bk$-equivalence $F: \Mesh(\Gamma') \rightarrow \mathcal C_{\Gamma'}$, which acts as the identity on objects, that is $F(X)=X$, for all objects $X$ in $\Mesh(\Gamma')$. 
\end{defn}

\begin{thm} \cite[Theorems 2.2]{LP} 
\label{LP2.2}
Let $Q$ be a connected quiver which is strongly locally finite. Then the preprojective and preinjective components of $\Gamma_{rep^+(Q)}$ are standard.
\end{thm}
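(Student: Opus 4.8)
The plan is to verify the two conditions of Definition~\ref{standard} for $\Gamma'$ equal to the preprojective component of $\Gamma_{rep^+(Q)}$, the preinjective case being entirely dual: that every indecomposable $X$ in $\Gamma'$ has automorphism field $k_X \cong \bk$, and that there is an identity-on-objects $\bk$-equivalence $F\colon \Mesh(\Gamma') \to \mathcal C_{\Gamma'}$.

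First I would treat the automorphism fields. Strong local finiteness forces $Q$ to be acyclic, since an oriented cycle through a vertex $a$ would produce infinitely many paths from $a$ to $a$, violating interval finiteness. Hence each indecomposable projective $P_j = \bk Q e_j$ satisfies $\End(P_j) = e_j\, \bk Q\, e_j = \bk e_j \cong \bk$, the trivial path being the only path from $j$ to $j$. Every indecomposable preprojective object has the form $\tau^{-n} P_j$, and since $\tau^-$ induces equivalences along the $\tau$-orbits that preserve endomorphism rings, $\End(\tau^{-n} P_j) \cong \End(P_j) \cong \bk$. Thus $\rad(X,X) = 0$ and $k_X \cong \bk$ for all $X$ in $\Gamma'$. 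As a by-product the valuation on $\Gamma'$ is symmetric, $d_{XY} = d'_{XY} = \dim_\bk \Irr(X,Y)$, so $\Gamma'$ is unvalued and $\Mesh(\Gamma')$ is defined.

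Next I would construct $F$. Fixing the polarization $\sigma$, I would choose for each arrow $\alpha\colon X \to Y$ of $\Gamma'$ an irreducible morphism $f_\alpha \in \Irr(X,Y)$, whose existence and shape are given by Lemma~\ref{irr}; these can be taken to be the components of the minimal almost split morphisms. The assignment $\alpha \mapsto f_\alpha$ extends to a $\bk$-functor from the path category of $\Gamma'$ to $\mathcal C_{\Gamma'}$ that is the identity on objects. The essential point is that it annihilates the mesh ideal: for each non-projective $Z$, the almost split sequence $0 \to \tau Z \xrightarrow{(g_\alpha)} \bigoplus_\alpha Y_\alpha \xrightarrow{(f_\alpha)} Z \to 0$ is a complex, so $\sum_\alpha f_\alpha g_\alpha = 0$; choosing the irreducible morphisms compatibly so that the morphism assigned to $\sigma(\alpha)$ is $g_\alpha$ yields $F(m_Z) = 0$. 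Hence $F$ descends to $\Mesh(\Gamma') \to \mathcal C_{\Gamma'}$.

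Finally I would prove $F$ is full and faithful, density being automatic. Fullness follows from interval finiteness: between preprojective indecomposables $\rad^\infty$ vanishes, so every morphism is a $\bk$-combination of compositions of irreducible morphisms and thus lies in the image of $F$. Faithfulness is the crux, and I expect it to be the main obstacle: one must show the mesh relations are the \emph{only} relations among compositions of irreducible morphisms. I would establish this by a dimension count, comparing $\dim_\bk \Hom_{\Mesh(\Gamma')}(X,Y)$ — the number of paths from $X$ to $Y$ modulo mesh relations — with $\dim_\bk \Hom_{\mathcal C}(X,Y)$. Because the preprojective component is directed, the latter is governed by knitting the almost split sequences, equivalently by the Euler form, and the resulting recursion matches the one satisfied by the mesh path counts; equality of the two dimensions upgrades the surjection on each $\Hom$ space to a bijection. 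Directedness of the component and the strong local finiteness of $Q$ are exactly what guarantee that these $\Hom$ spaces are finite-dimensional and that this bookkeeping closes up.
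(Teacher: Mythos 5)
The first thing to say is that the paper itself never proves Theorem~\ref{LP2.2}: it is quoted from Liu--Paquette \cite{LP} and used as a black box (together with Theorem~\ref{LP2.3}) to justify Lemma~\ref{well def}, so there is no internal argument to compare yours against; your attempt has to be measured against what the cited theorem actually requires. Your skeleton is the right one, and two of the three ingredients are essentially sound: interval finiteness does force acyclicity, so $\End(P_j)\cong\bk$ and (modulo the standard facts that endomorphisms of preprojectives factoring through projectives or injectives vanish) $k_X\cong\bk$ for every $X$ in the component; and fullness does come from the vanishing of $\rad^\infty$ between objects of the component, though in $rep^+(Q)$ with $Q$ infinite that vanishing is itself one of the things \cite{LP} must establish.

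The genuine gap is in your construction of $F$, at the words ``choosing the irreducible morphisms compatibly so that the morphism assigned to $\sigma(\alpha)$ is $g_\alpha$.'' Every arrow $\alpha\colon X\to Z$ of the component lies in \emph{two} meshes: it is a sink-map component of the mesh ending at $Z$, and it equals $\sigma(\beta)$ for $\beta\colon Z\to\tau^- X$, hence is a source-map component of the mesh ending at $\tau^- X$, whenever that mesh exists. So the choices interlock: the sink map you complete at one mesh is forced on you as part of the source map of the neighbouring mesh, and you cannot choose mesh-by-mesh independently. Repairing this requires an inductive (knitting) argument with a carefully chosen order of the meshes --- for finite $Q$ a topological order of the acyclic quiver makes the induction close up, while for an infinite strongly locally finite quiver there may be no sink or source to start from, which is part of what \cite{LP} actually prove. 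The paper is explicit that exactly this kind of consistency is delicate: Step 1 of Section~\ref{subsec:filling} is precisely such a successive construction (for tree quivers), and Example~\ref{ex:non-tree} exhibits a closely related filling problem, the $\tau$-equivariant one, that has \emph{no} solution for a non-tree quiver; so compatibility cannot simply be asserted. The second gap is faithfulness: you correctly identify it as the crux, but what you offer is a plan, not a proof. The dimension count needs the additivity formula obtained by applying $\Hom(X,-)$ to almost split sequences --- including the correction term when $Z\cong X$ and the vanishing of the relevant factorization/extension terms in a directed component --- matched against the corresponding recursion for path counts modulo the mesh ideal, and it needs the finite-dimensionality of all the $\Hom$ spaces involved. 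As written, the hardest half of the theorem is assumed rather than proved.
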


From this theorem, we know that in the preprojective component there is a set of irreducible morphisms satisfying the mesh relations. If a morphism $f$ can be written as a sum of compositions of irreducible morphisms from that set, then $f=0$ if and only if the sum of compositions of irreducible morphisms is generated by the mesh relations. Finally, this will be used to show the well-definedness of our homomorphism between two algebras $\Sigma$ and $\Pi$ in the main Theorem~\ref{thm:alg isom}.

 
\subsection{Derived category and Auslander-Reiten triangles}
\label{subsec:derived cat}

Auslander-Reiten theory was generalized to the derived category of the path algebra by Happel \cite{H}. 
 
\begin{defn}
Suppose $H$ is a hereditary algebra. The {\bf transjective component $\mathcal T$} of the bounded derived category $D^b(H)$ is the full subcategory containing preprojective modules and $(-1)$-shifted preinjective modules. 
\end{defn}

The Auslander-Reiten triangles in $D^b(H)$ are described by Happel \cite[\S 5.4]{H}. Recall that there are two kinds of Auslander-Reiten triangles in $D^b(H)$:

$(1)$ If $Z$ is an indecomposable non-projective $H$-module, then there is an almost split sequence in $H$-$\modd$:
$$
\xymatrix{0\ar[r]&X\ar[r]^u&Y\ar[r]^v&Z\ar[r]&0.}
$$
Let $w$ be the element in $\Ext^1_{H}(Z,X)=\Hom_{D^b(H)}(Z,X[1])$ corresponding to this almost split sequence. Then the following are Auslander-Reiten triangles:
$$
\xymatrix{X[i]\ar[r]^{u[i]}&Y[i]\ar[r]^{v[i]}&Z[i]\ar[r]^(0.4){w[i]}&X[i+1], & \text{ for all } i \in \mathbb Z}.
$$

$(2)$ For an indecomposable projective $H$-module $P$, let $I=\nu P$ be the indecomposable injective module corresponding to $P$ under the Nakayama functor $\nu$, that is, $I$ is the injective envelope of $P/\rad P$. Then there are Auslander-Reiten triangles:
$$
\xymatrix{I[i]\ar[r]&(I/{\soc I})[i]\oplus (\rad P)[i+1]\ar[r]&P[i+1]\ar[r]&I[i+1], & \text{ for all } i \in \mathbb Z}.
$$

The following is needed for our technique of filling irreducible morphisms in Section~\ref{subsec:filling}. 
\begin{thm} \cite[Theorems 2.3]{LP} 
\label{LP2.3}
 Let $Q$ be a connected quiver which is strongly locally finite. Then the transjective component of $\Gamma_{D^b(rep^+(Q))}$ is standard.
\end{thm}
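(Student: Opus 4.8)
The plan is to prove standardness of the transjective component $\mathcal{T}$ of $D^b(\bk Q)$ in two stages: first pin down its structure as a translation quiver, and then build the mesh equivalence $F\colon \Mesh(\mathcal{T}) \to \mathcal{C}_{\mathcal{T}}$. For the structure, I would use Happel's two families of Auslander-Reiten triangles recalled just above: family $(1)$ shows that each non-projective mesh of $\bk Q$-$\modd$ lifts, under the shifts $[i]$, to a $\mathbb{Z}$-indexed chain of meshes, while family $(2)$ supplies the \emph{connecting} meshes that link the shifted injectives $I[i]$ to the shifted projectives $P[i+1]$. Splicing these shows that, as a translation quiver, $\mathcal{T}$ is the repetition quiver $\mathbb{Z}Q$; in particular $\mathcal{T}$ has no oriented cycles and $\tau$ acts freely. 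I would also record at this stage that every indecomposable $X$ of $\mathcal{T}$ has automorphism field $k_X \cong \bk$ (trivial valuation), so that $\Mesh(\mathcal{T})$ is defined in the sense of Definition~\ref{mesh def}.

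Next I would construct the functor $F$ acting as the identity on objects. For each arrow $\alpha\colon X \to Y$ of $\Gamma_{\mathcal{T}}$, choose a nonzero irreducible morphism $f_\alpha \in \rad(X,Y)$ representing a basis vector of $\Irr(X,Y)$, and set $F(\alpha)=f_\alpha$. The scaling freedom in the $f_\alpha$ is exactly what must be exploited to enforce the mesh relations: for each non-projective vertex $z$ one needs $\sum_{\alpha\colon x \to z} f_{\sigma(\alpha)} f_\alpha = 0$. Because $\mathcal{T}$ is directed, this can be arranged inductively along any ordering compatible with the arrows, each Auslander-Reiten triangle $\tau z \to E \to z \to \tau z[1]$ guaranteeing that the relevant composite lies in the position that can be rescaled to vanish, with no earlier choice later contradicted. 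Here I would lean on Theorem~\ref{LP2.2}: on the preprojective half and on the $(-1)$-shifted preinjective half of $\mathcal{T}$ the required normalization already exists, so the genuine work is to match these two normalizations across the connecting meshes coming from family $(2)$.

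Finally I would show $F$ is an equivalence. Fullness follows because the irreducible morphisms generate the radical of $\mathcal{C}_{\mathcal{T}}$ and $\rad^\infty$ vanishes between transjective objects, so every morphism is a sum of composites of the $f_\alpha$; faithfulness then follows from a dimension count, comparing $\dim_\bk \Hom_{\mathcal{C}}(X,Y)$, computed from the Euler form of the hereditary algebra (with $\Ext^1$ controlled by directedness), against $\dim_\bk \Hom_{\Mesh(\mathcal{T})}(X,Y)$, the number of mesh-paths $X \to Y$. I expect the main obstacle to be precisely the well-definedness of $F$ at the connecting meshes of family $(2)$: unlike the purely module-theoretic meshes handled by Theorem~\ref{LP2.2}, these straddle the homological shift, so one must verify that the chosen irreducible morphisms among $I[i]$, $(\rad P)[i+1]$, and $P[i+1]$ can be simultaneously normalized to kill the mesh relation while remaining compatible with the normalizations already fixed on the two hereditary halves. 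Controlling this compatibility is the real content of the theorem, and the simple-connectedness of $\mathbb{Z}Q$ for a tree $Q$ is what ultimately guarantees that no obstruction arises.
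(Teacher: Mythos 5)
First, a point of comparison: the paper contains no proof of Theorem~\ref{LP2.3} at all; the statement is quoted verbatim from Liu--Paquette \cite{LP}, so your proposal has to stand on its own rather than be measured against an argument in this paper. It does not stand, for the following reason. Your concluding claim --- that ``the simple-connectedness of $\mathbb{Z}Q$ for a tree $Q$ is what ultimately guarantees that no obstruction arises,'' and that this is ``the real content of the theorem'' --- contradicts the very statement you are proving: Theorem~\ref{LP2.3} holds for \emph{every} connected strongly locally finite quiver, tree or not. You have conflated two different results. What genuinely requires tree-ness is the paper's Theorem~\ref{main thm}/Corollary~\ref{sign diff 2}: the existence of a \emph{$\tau$-equivariant} filling, i.e.\ irreducible morphisms of the special form $\tau^n f_{ij}$, $\tau^n g^{(\lambda)}_{ij}$ satisfying the mesh relations for all $n$ simultaneously; Example~\ref{ex:non-tree} shows exactly this fails for a non-tree quiver. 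Standardness only requires \emph{some} assignment of irreducible morphisms, one per arrow of the component, satisfying the mesh relations, with no compatibility under $\tau$ demanded, and such an assignment exists for all connected strongly locally finite quivers --- that is what \cite{LP} proves. An argument whose decisive step ``ultimately'' rests on $Q$ being a tree therefore cannot be a proof of Theorem~\ref{LP2.3}; at best it addresses the special case this paper happens to use.

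Two further steps would not run as written. First, your induction ``along any ordering compatible with the arrows'' has no base case: unlike the preprojective component of the module category, the translation on the transjective component is everywhere defined, so every vertex is the end of a mesh and has infinitely many predecessors. One must anchor the construction on a section (say the slice of the $P_i$) with freely chosen slice arrows and propagate meshes in both directions; and to do that without disturbing earlier choices you need the fact (valid in the trivial-valuation case, but nowhere stated by you) that any tuple of irreducible morphisms, one for each arrow out of (resp.\ into) a given vertex, is automatically minimal left (resp.\ right) almost split and hence completes to an AR-triangle. Without this, your ``rescaling'' step founders on $\rad^2$-corrections: the morphisms fixed at earlier stages agree with components of an actual AR-triangle only modulo $\rad^2$, and scalar rescaling cannot absorb such error terms, so the mesh relation need not be achievable by rescaling alone. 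Second, your structural input (Happel's list of AR-triangles, the Nakayama functor) and your faithfulness argument (Euler form of the hereditary algebra) presuppose that $\bk Q$ is finite dimensional. The theorem is stated for strongly locally finite, possibly infinite, quivers, where Happel's description does not apply as such and the connecting component of $\Gamma_{D^b(rep^+(Q))}$ need not even have shape $\mathbb{Z}Q$; this is why \cite{LP} instead verifies, inside $D^b(rep^+(Q))$, the hypotheses of their general standardness criterion (their Theorem~1.4, the same result this paper invokes after Definition~\ref{hg}), rather than arguing as you do.
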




\section{Irreducible morphisms in the AR-quiver}
\label{sec:irred}

\subsection{The filling technique}
\label{subsec:filling}

In this section, let $Q$ be a tree-type quiver. We will show the technique of constructing a set of irreducible morphisms in the Auslander-Reiten quiver of the transjective component $\mathcal T$ of $D^b(\bk Q)$ such that they satisfy the so-called $\lambda$-relations, which will later be used to prove our main Theorem~\ref{thm:alg isom}. In particular, we answer the following problem.

\vskip10pt
\noindent
\textbf{Problem:}  Let $Q=(Q_0,Q_1)$ be a tree-type quiver.  Let $\{ f_{ij}: P_i\rightarrow P_j\ |\ (j\rightarrow i) \in Q_1\}$ be a set of irreducible morphisms. Let $\lambda\in \bk^\times$. Is there a set of morphisms $\{g^{(\lambda)}_{ij}\}$, where $g^{(\lambda)}_{ij}: \tau P_j\rightarrow P_i$, such that for each vertex $j\in Q_0$, they satisfy the following {\bf $\lambda$-relations}?
\begin{eqnarray}\sum_{(k\rightarrow j) \in Q_1}g^{(\lambda)}_{jk}\tau f_{jk} + \lambda\sum_{(j\rightarrow i) \in Q_1}f_{ij}g^{(\lambda)}_{ij}=0 \label{lambda relation}\end{eqnarray} 
That is, for each $j$, we have {\bf $\lambda$-commutative squares}:
\begin{center} 
\tiny\begin{tikzpicture}[->]
\node (1) at (-2,0) {$\tau P_j$};
\node (2) at (0,1) {$\mathop{\bigoplus}\limits_{(k\rightarrow j) \in Q_1}\tau P_k$};
\node (3) at (0,-1) {$\mathop{\bigoplus}\limits_{(j\rightarrow i) \in Q_1} P_i$};
\node (4) at (2,0) {$P_j$ .};
\node (5) at(0,0) {$(\lambda)$};
 
\draw (1)--node[above] {$(\tau f_{jk})$}(2) ;
\draw (1)--node[below] {$(g^{(\lambda)}_{ij})$}(3);
\draw (2)--node[above] {$(g^{(\lambda)}_{jk})$}(4);
\draw (3)--node[below] {$(f_{ij})$}(4);
\end{tikzpicture}
\end{center}

\begin{rmk}\label{relations}
$(a)$ When $\lambda=1$, the $(1)$-relations are known as the {\bf mesh relations}:
\begin{eqnarray}\sum_{(k\rightarrow j) \in Q_1}g^{(1)}_{jk}\tau f_{jk}+\sum_{(j\rightarrow i) \in Q_1}f_{ij}g^{(1)}_{ij}=0.\label{mesh relation}\end{eqnarray} 
Later in this section, we will explain a functorial correspondence between these relations and the mesh ideal in Definition $\ref{mesh def}$. Hence, we refer to these as ``mesh relations''.

$(b)$When $\lambda=-1$, the $(-1)$-relations are known as the {\bf commutativity relations}: 
\begin{eqnarray}\sum_{(k\rightarrow j) \in Q_1}g^{(-1)}_{jk}\tau f_{jk}-\sum_{(j\rightarrow i) \in Q_1}f_{ij}g^{(-1)}_{ij}=0.\label{commutativity}\end{eqnarray} 

$(c)$ The motivation of $\lambda$-relations comes from \cite[\S 6]{R2}, where the author considers the algebra $\Pi_\lambda :=\bk{\overline Q}/\mathop{\sum}\limits_{\alpha\in Q_1}(\alpha^*\alpha-\lambda\alpha\alpha^*)$.
\end{rmk}

\vskip10pt
\noindent
\textbf{Our solution:}  (see Theorem \ref{main thm})
Our strategy of constructing such morphisms $\{g^{(\lambda)}_{ij}\}$ is divided into two steps.

\noindent {\bf Step 1.} Let $\{ f_{ij}: P_i\rightarrow P_j\ |\ (j\rightarrow i) \in Q_1\}$ be the given set of  irreducible morphisms. By the existence theorem of almost split sequences, starting from a source point of $Q$, we can successively find irreducible morphisms
$u_{ij}: \tau P_i\rightarrow \tau P_j$ and $v_{ij}: \tau P_j\rightarrow P_i$, such that for each $j\in Q_0$,  they satisfy the mesh relations (via a similar correspondence with the mesh ideal, as in Remark $\ref{relations}$ (a)):
\begin{eqnarray}
\sum_{(k\rightarrow j) \in Q_1} v_{jk}u_{jk}+\sum_{(j\rightarrow i) \in Q_1}f_{ij}v_{ij}=0, \label{mesh}
\end{eqnarray}
that is, we have $1$-commutative squares:
\begin{center} 
\tiny\begin{tikzpicture}[->]
\node(1) at (-2,0) {$\tau P_j$};
\node (2) at (0,1) {$\mathop{\bigoplus}\limits_{(k\rightarrow j) \in Q_1}\tau P_k$};
\node (3) at (0,-1) {$\mathop{\bigoplus}\limits_{(j\rightarrow i) \in Q_1} P_i$};
\node (4) at (2,0) {$ P_j$ .};
\node (5) at(0,0) {$(\lambda=1)$};

\draw (1)--node[above] {$(u_{jk})$\ \ }(2) ;
\draw (1)--node[below] {$(v_{ij})$}(3);
\draw (2)--node[above] {$(v_{jk})$}(4);
\draw (3)--node[below] {$(f_{ij})$}(4);
\end{tikzpicture}
\end{center}

\noindent {\bf Step 2.}
Given a fixed choice of the set of irreducible morphisms
$\{u_{ij}, v_{ij}\}$ as in Step 1, we construct $\{g^{(\lambda)}_{ij}\}$ by scalar modification of $\{v_{ij}\}$. We will describe this construction below, and provide a detailed illustration in Example~\ref{example}.

Notice that the morphisms $u_{ij}$ are not uniquely determined when we complete the almost split sequences. On one hand, $u_{ij}$ are irreducible morphisms from $\tau P_i$ to $\tau P_j$; on the other hand, since $\tau$ is a functor on $\mathcal T$, we know that $\tau f_{ij}$ are also irreducible morphisms from $\tau P_i$ to $\tau P_j$. In general, not all $u_{ij}$ can be chosen to be precisely $\tau f_{ij}$, however:
 
\begin{lem}
\label{cij lem}
For each arrow $(j\rightarrow i)$, there exists a $c_{ij}\in \bk^\times$ such that $c_{ij}\tau f_{ij} = u_{ij}.$ 
 \end{lem}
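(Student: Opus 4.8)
The plan is to show that the two irreducible morphisms $u_{ij}$ and $\tau f_{ij}$ both lie in the same one-dimensional $\bk$-vector space, which forces one to be a nonzero scalar multiple of the other. First I would record that both are nonzero elements of $\Hom_{\mathcal T}(\tau P_i,\tau P_j)$: the morphism $u_{ij}$ is irreducible by its construction in Step 1, while $\tau f_{ij}$ is the image of the irreducible morphism $f_{ij}$ under the autoequivalence $\tau$ of $\mathcal T$ and is therefore again irreducible; in either case an irreducible morphism between indecomposable objects is nonzero (it lies in $\rad(\tau P_i,\tau P_j)\setminus\rad^2(\tau P_i,\tau P_j)$).

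The key step is to compute $\dim_\bk\Hom_{\mathcal T}(\tau P_i,\tau P_j)$. Since $\tau$ is an autoequivalence of $\mathcal T$, it is fully faithful, so $\Hom_{\mathcal T}(\tau P_i,\tau P_j)\cong\Hom_{\mathcal T}(P_i,P_j)$. Because $P_i$ and $P_j$ are preprojective modules (complexes concentrated in degree $0$) and $\bk Q$ is hereditary, this derived $\Hom$ coincides with the module $\Hom$, namely $\Hom_{\bk Q}(P_i,P_j)\cong e_i\,\bk Q\,e_j$, whose dimension equals the number of directed paths in $Q$ from $j$ to $i$. Here the tree hypothesis enters decisively: the arrow $j\rightarrow i$ is already such a path, and since the underlying graph of $Q$ is a tree there can be no other path between these two vertices. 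Hence $\Hom_{\mathcal T}(\tau P_i,\tau P_j)$ is exactly one-dimensional over $\bk$ (recall $k_X\cong\bk$ for every indecomposable $X$, as $Q$ is a tree).

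With this in hand the conclusion is immediate: $u_{ij}$ and $\tau f_{ij}$ are two nonzero vectors in a one-dimensional $\bk$-space, so there is a unique $c_{ij}\in\bk^\times$ with $u_{ij}=c_{ij}\,\tau f_{ij}$, which is the asserted equality.

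I expect the main obstacle to be conceptual rather than computational, namely ensuring that one obtains the \emph{exact} identity $u_{ij}=c_{ij}\tau f_{ij}$ and not merely an equality modulo $\rad^2(\tau P_i,\tau P_j)$, which is all that irreducibility of the two morphisms would give for a general quiver. The tree hypothesis is precisely what closes this gap: since $\tau P_i\not\cong\tau P_j$ we have $\rad(\tau P_i,\tau P_j)=\Hom_{\mathcal T}(\tau P_i,\tau P_j)$, which is one-dimensional, while $\Irr(\tau P_i,\tau P_j)=\rad/\rad^2$ is also one-dimensional by the trivial valuation; hence $\rad^2(\tau P_i,\tau P_j)=0$ and the congruence modulo $\rad^2$ upgrades to a genuine scalar equation.
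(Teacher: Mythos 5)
Your proof is correct, and it takes a somewhat different route than the paper's. The paper argues entirely inside the bimodule of irreducible morphisms: both $u_{ij}$ and $\tau f_{ij}$ give nonzero classes in $\Irr(\tau P_i,\tau P_j)=\rad(\tau P_i,\tau P_j)/\rad^2(\tau P_i,\tau P_j)$, and since $Q$ has no multiple arrows this space is one-dimensional over $\bk$, whence $c_{ij}\tau f_{ij}=u_{ij}$ for some $c_{ij}\in\bk^\times$. Taken literally, that argument produces the equality only modulo $\rad^2(\tau P_i,\tau P_j)$, and the paper leaves implicit why that radical square vanishes. You instead compute the full Hom space: using that $\tau$ is an equivalence on $\mathcal T$ and that $P_i$, $P_j$ are complexes concentrated in degree zero, $\Hom_{\mathcal T}(\tau P_i,\tau P_j)\cong\Hom_{\bk Q}(P_i,P_j)\cong e_i\,\bk Q\,e_j$, which is one-dimensional because the arrow $j\rightarrow i$ is the unique directed path from $j$ to $i$ in a tree. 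Two nonzero vectors in a one-dimensional space are proportional, so you obtain the exact identity directly; and your closing observation (that $\rad=\Hom$ here is one-dimensional while $\Irr$ is one-dimensional, forcing $\rad^2=0$) is precisely the point the printed proof glosses over. In short, your argument is valid and slightly more complete than the paper's: the cost is the extra, easy computation of $\Hom_{\bk Q}(P_i,P_j)$ via paths, and the benefit is that the scalar relation is established on the nose rather than modulo $\rad^2$.
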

 \begin{proof}
It is clear that $\tau f_{ij}$ and $u_{ij}$ belong to  $\Irr(\tau P_i,\tau P_j)$. Since $Q$ does not have multiple arrows, $\dim_\bk\Irr(\tau P_i,\tau P_j)=1$. Hence for some $c_{ij}\in\bk^\times$,  $c_{ij}\tau f_{ij} = u_{ij}$.
 \end{proof}
 
Observe that the coefficients $c_{ij}$ are associated with the arrows $(j\rightarrow i)$ in the quiver. We will use these coefficients $c_{ij}$, obtained in Lemma~\ref{cij lem}, to later define a function $\varphi_\lambda$ from a set of walks to $\bk^{\times}$, which we will use in the scalar modification process to construct irreducible morphisms $\{g^{(\lambda)}_{ij}\}$.

In a quiver, a reverse arrow of an arrow $\alpha$ is formally denoted by $\alpha^{-1}$. It reverses the starting and ending vertex of $\alpha$. 
A {\bf walk} $w$ in the quiver is a finite sequence $w_nw_{n-1}\cdots w_1$, where $w_i=\alpha^{\pm}$ is either an arrow or reverse arrow, and $w_{i+1}\neq w_i^{-1}$ for all $i$. Denote $W$ to be the set of all walks on the quiver $Q$.

\begin{lem}
Suppose $Q$ is a tree-type quiver and $a$, $b$ are vertices in $Q$. Then there is a unique walk from $a$ to  $b$.
\end{lem}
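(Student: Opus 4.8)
The plan is to pass to the underlying undirected graph $G$ of $Q$, which is a tree by hypothesis, and to read off the two features we need: $G$ is connected, which will give existence, and $G$ is acyclic, which will give uniqueness. A walk $w_n\cdots w_1$ in the sense defined above is exactly a \emph{reduced} edge-walk in $G$, that is, a sequence of edges traversed with no immediate backtracking, since the condition $w_{i+1}\neq w_i^{-1}$ says we never retrace the edge we just used. I will also use that $G$, being a tree, is a simple graph (no loops and no multiple edges between two vertices), so that a step starting at a prescribed vertex is determined by the edge it uses; in particular two distinct steps out of the same vertex use distinct edges.

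For existence I will first invoke connectedness to obtain \emph{some} walk from $a$ to $b$ in $G$, which orients into a sequence of arrows and reverse arrows from $a$ to $b$ in $Q$ (when $a=b$ the empty walk already works). If this sequence contains a backtrack $w_{i+1}=w_i^{-1}$, I delete the offending pair of consecutive steps; this strictly shortens the length while preserving the endpoints. Repeating this deletion — a new backtrack may appear at the splice, but the length strictly decreases, so the process terminates — yields a walk with no backtracking, i.e.\ a walk in the required sense.

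The heart of the argument, and the main obstacle, is uniqueness, which I will reduce to the key lemma that \emph{in a tree there is no reduced closed walk of positive length.} To prove this lemma, given such a closed walk with vertex sequence $v_0,\dots,v_n=v_0$, I choose a repeated pair $v_i=v_j$ with $j-i$ minimal and positive, so that $v_i,\dots,v_{j-1}$ are distinct while $v_j=v_i$. The case $j-i=1$ would produce a loop, the case $j-i=2$ either a backtrack (excluded, as the walk is reduced) or a double edge, and the case $j-i\geq 3$ a genuine cycle in $G$; all three contradict that $G$ is a simple acyclic graph. Granting the lemma, I finish uniqueness as follows: given two distinct walks $w,w'$ from $a$ to $b$, I strip their longest common initial segment and their longest common final segment, arriving at two walks $s,s'$ between vertices $c$ and $d$ whose first steps differ and whose last steps differ. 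Concatenating $s$ with the reversal of $s'$ produces a closed walk at $c$, and the differing last steps guarantee that no backtrack is created at the splice point, so this closed walk is reduced and of positive length (if one of $s,s'$ is empty, then $c=d$ and the other is already a nonempty reduced closed walk). This contradicts the key lemma, forcing $w=w'$. The delicate points to get right are that the splicing does not secretly introduce a backtrack — which is precisely why \emph{both} the common prefix and the common suffix must be removed — and the bookkeeping of the degenerate cases $a=b$ and $s$ or $s'$ empty.
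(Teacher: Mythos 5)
Your proof is correct, and it is a genuinely fuller argument than the one in the paper. The paper disposes of the lemma in one line: since $Q$ is a tree, a walk can pass through any vertex at most once, hence the walk is unique; existence (connectedness of the tree) is not addressed at all, and the step from ``walks do not repeat vertices'' to uniqueness is left to the reader. You instead handle existence explicitly (connectedness plus iterated backtrack-removal, with termination by length), and you route uniqueness through the key lemma that a tree admits no reduced closed walk of positive length, proved by the minimal-repeated-pair trichotomy (loop / backtrack-or-double-edge / genuine cycle), followed by the prefix-and-suffix-stripping splice argument. The two proofs rest on the same underlying fact --- acyclicity of the tree forbids reduced closed walks, equivalently reduced walks are vertex-simple --- but your organization is different and buys rigor exactly where the paper's sketch is silent: the verification that splicing $s$ with the reversal of $s'$ creates no backtrack at the junction (which is precisely why both the common prefix and the common suffix must be stripped), and the degenerate cases $a=b$ and $s$ or $s'$ empty. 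The paper's version buys brevity and suffices for its purposes, since the lemma is only used to define the evaluation $\varphi_\lambda(w(j,1))$ on the unique walk to a fixed sink; your version is the one you would want if the lemma had to stand on its own.
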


\begin{proof} 
Because $Q$ is a tree-type quiver, a walk $w$ can only pass a vertex at most once. Hence it is unique. 
\end{proof}

Denote by $w(a,b)$ the unique walk from $a$ to $b$, and by $|w|$ the length of the walk $w$.
%
We define an evaluation function on the walks as follows:

%
%
%


\begin{defn}
For a tree-type quiver $Q$ and some $\lambda\in\bk^\times$, define $\varphi_\lambda:W\rightarrow \bk^\times$  by $\varphi_\lambda(\alpha)=\lambda c_{ij}$  for an arrow $(\alpha: j\rightarrow i)$, and extend it to all the walks by $\varphi_\lambda(w_2w_1)=\varphi_\lambda(w_2)\varphi_\lambda(w_1)$ if $w_2w_1$ is composable, and $\varphi_\lambda(w^{-1})=\varphi_\lambda(w)^{-1}$.
 \end{defn}

\begin{lem}
\label{phi function}
Let $Q$ be a tree type quiver and $w$ be a walk starting at vertex $j$ in $Q$.
\begin{enumerate}
\item Let $\alpha$ be an arrow $k\rightarrow j$ and $\beta$ be an arrow $j\rightarrow k$. Then 
$\varphi_\lambda(w\alpha)= \lambda c_{jk}\varphi_\lambda(w)$ and $\varphi_\lambda(w\beta^{-1})=\lambda^{-1} c_{jk}^{-1}\varphi_\lambda(w)$.
\item Function $\varphi_{-1}(w)=(-1)^{|w|}\varphi_{1}(w)$, where $|w|$ is the length of the walk $w$.
\end{enumerate}
\end{lem}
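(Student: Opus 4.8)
The plan is to treat both statements as purely formal consequences of the multiplicative definition of $\varphi_\lambda$, since $\varphi_\lambda$ is completely determined by its values on single letters (arrows and reverse arrows) together with the rules $\varphi_\lambda(w_2w_1)=\varphi_\lambda(w_2)\varphi_\lambda(w_1)$ for composable products and $\varphi_\lambda(w^{-1})=\varphi_\lambda(w)^{-1}$. The only preliminary point is to record that, because $w$ starts at $j$ while both $\alpha$ and $\beta^{-1}$ \emph{end} at $j$, the concatenations $w\alpha$ and $w\beta^{-1}$ are again (reduced) walks, so that $\varphi_\lambda$ is defined on them; here the tree hypothesis on $Q$ is exactly what guarantees that walks are unambiguous and that $\varphi_\lambda$ is well defined.

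For part (1), I would split off the last letter. Writing $w\alpha$ as the composable product of $w$ with the single arrow $\alpha$, multiplicativity gives $\varphi_\lambda(w\alpha)=\varphi_\lambda(w)\,\varphi_\lambda(\alpha)$, and the definition evaluated on $\alpha:k\to j$ reads $\varphi_\lambda(\alpha)=\lambda c_{jk}$, yielding $\varphi_\lambda(w\alpha)=\lambda c_{jk}\varphi_\lambda(w)$. For $w\beta^{-1}$, the same splitting together with the inverse rule gives $\varphi_\lambda(w\beta^{-1})=\varphi_\lambda(w)\,\varphi_\lambda(\beta)^{-1}$, so substituting the defining value of $\varphi_\lambda$ on the single arrow $\beta:j\to k$ and inverting produces the claimed scalar. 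Both identities are thus immediate once the product is taken one letter at a time.

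For part (2), the key observation is that $\varphi_{-1}$ and $\varphi_{1}$ differ by exactly a factor of $-1$ on every single letter: on an arrow this is clear because $\lambda$ enters the defining value as a linear factor, and on a reverse arrow it follows because inverting turns the factor $\lambda=-1$ into $(-1)^{-1}=-1$ as well. Granting this, I would decompose an arbitrary walk $w=w_{|w|}\cdots w_1$ into its $|w|$ letters and apply multiplicativity: each letter contributes one factor of $-1$ when passing from $\varphi_1$ to $\varphi_{-1}$, so altogether $\varphi_{-1}(w)=(-1)^{|w|}\varphi_1(w)$. Equivalently, one can induct on $|w|$, the base case being a single arrow or reverse arrow and the inductive step a single application of multiplicativity.

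I do not expect a genuine obstacle here, as the whole lemma is an unwinding of the definition of $\varphi_\lambda$. The only points demanding care are the bookkeeping of the orientation-dependent subscripts on the constants $c_{\bullet\bullet}$, so that part (1) records the coefficient attached to the arrow actually present between $j$ and $k$, and the verification that the concatenated sequences are legitimate (reduced, composable) walks, which is precisely where the assumption that $Q$ is a tree is used.
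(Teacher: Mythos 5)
Your proof is correct and takes essentially the same approach as the paper, whose entire argument reads ``(1) follows from definition, (2) follows by induction on the length of the walks'': your letter-by-letter splitting via multiplicativity is exactly (1), and your observation that each letter (arrow or inverse arrow, since $(-1)^{-1}=-1$) contributes a single factor of $-1$ is exactly the induction for (2). You simply supply the routine details, including the subscript bookkeeping for the constants $c_{\bullet\bullet}$, that the paper leaves implicit.
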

\begin{proof} (1) follows from definition, (2) follows by induction on the length of the walks.
\end{proof}

The maps $\varphi_{\lambda}$ will be used to modify  irreducible maps $v_{ij}$ from Step 1. in order to get irreducible maps $g^{(\lambda)}_{ij}$ in Step 2. which will satisfy $\lambda$-relations.
\begin{lem}
\label{main lem}
Suppose $Q$ is a tree-type quiver and $\{ f_{ij}: P_i\rightarrow P_j\ |\ (j\rightarrow i) \in Q_1\}$ is a set of irreducible morphisms. Then there exist irreducible morphisms $\{g^{(\lambda)}_{ij}\}_{(j \rightarrow i)\in Q_1}$ satisfying the $\lambda$-relations for each $j\in Q_0$:
$$\sum_{(k\rightarrow j) \in Q_1}g^{(\lambda)}_{jk}\tau f_{jk}+\lambda\sum_{(j\rightarrow i) \in Q_1}f_{ij}g^{(\lambda)}_{ij}=0.$$
\end{lem}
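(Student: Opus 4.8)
\emph{Proof proposal.} The plan is to obtain each $g^{(\lambda)}_{ij}$ as a nonzero scalar multiple of the morphism $v_{ij}$ constructed in Step~1, choosing the scalars so that, at every vertex, the left-hand side of the $\lambda$-relation becomes a fixed scalar multiple of the left-hand side of the mesh relation \eqref{mesh}, which is already known to vanish. Concretely, I would attach to each vertex $j\in Q_0$ a scalar $h_j\in\bk^\times$ (to be determined) and set $g^{(\lambda)}_{ij}:=h_j\,v_{ij}$. Since $v_{ij}$ is irreducible and $h_j\neq 0$, every $g^{(\lambda)}_{ij}$ is again irreducible, so only the relation itself needs checking.

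First I would substitute this ansatz into the $\lambda$-relation and eliminate $\tau f_{jk}$ using Lemma~\ref{cij lem}: from $u_{jk}=c_{jk}\tau f_{jk}$ we get $\tau f_{jk}=c_{jk}^{-1}u_{jk}$, so at the vertex $j$ the left-hand side equals
$$
\sum_{(k\to j)\in Q_1} h_k\,c_{jk}^{-1}\,v_{jk}u_{jk}\;+\;\lambda h_j\sum_{(j\to i)\in Q_1} f_{ij}v_{ij}.
$$
Comparing this term by term with \eqref{mesh}, the whole expression equals $(\lambda h_j)$ times the left-hand side of \eqref{mesh}, hence is $0$, precisely when the coefficients agree, i.e. $h_k\,c_{jk}^{-1}=\lambda h_j$ for every arrow $(k\to j)$. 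Since $\varphi_\lambda(k\to j)=\lambda c_{jk}$ by definition, this is exactly the transport condition $h_k=\varphi_\lambda(k\to j)\,h_j$.

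It then remains to produce vertex scalars $h_j\in\bk^\times$ satisfying $h_k=\varphi_\lambda(\gamma)\,h_j$ for every arrow $\gamma\colon k\to j$, and this is where the tree hypothesis is used. Fixing a base vertex $x_0$, I would define $h_j:=\varphi_\lambda\big(w(j,x_0)\big)$ via the unique walk from $j$ to $x_0$, which exists because $Q$ is a tree. For an arrow $\gamma\colon k\to j$, the unique walks to $x_0$ from $k$ and from $j$ differ by exactly one step $\gamma$ (either $w(k,x_0)=w(j,x_0)\gamma$ or $w(j,x_0)=w(k,x_0)\gamma^{-1}$, depending on which side of the edge $\gamma$ the base point $x_0$ lies), so Lemma~\ref{phi function}(1) together with $\varphi_\lambda(w^{-1})=\varphi_\lambda(w)^{-1}$ gives $h_k=\varphi_\lambda(\gamma)\,h_j$ in both cases. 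As $\varphi_\lambda$ takes values in $\bk^\times$, each $h_j$ is a unit, and substituting back shows the $\lambda$-relation at $j$ equals $\lambda h_j$ times the vanishing mesh relation.

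I expect the only genuine obstacle to be the \emph{global consistency} of the family $\{h_j\}$: the constraints $h_k=\varphi_\lambda(\gamma)h_j$ must hold simultaneously for all arrows, and in the presence of an unoriented cycle the product of the $\varphi_\lambda$-values around the cycle need not be trivial, which would make no such assignment exist. The acyclicity of a tree is exactly what removes this obstruction and renders the walk-based definition $h_j=\varphi_\lambda(w(j,x_0))$ unambiguous; this is the structural reason the construction is confined to tree-type quivers, in parallel with the non-tree discussion. Everything else reduces to the routine coefficient comparison with \eqref{mesh} indicated above.
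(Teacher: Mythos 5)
Your proposal is correct and takes essentially the same approach as the paper: both rescale the $v_{ij}$ by $\varphi_\lambda$ evaluated on the unique walk from $j$ to a fixed base vertex (the paper uses a sink labelled $1$), so that the $\lambda$-relation at each vertex becomes $\lambda\varphi_\lambda(w(j,1))$ times the vanishing mesh relation \eqref{mesh}. If anything, your explicit treatment of the two concatenation cases ($w(k,x_0)=w(j,x_0)\gamma$ versus $w(j,x_0)=w(k,x_0)\gamma^{-1}$) is slightly more careful than the paper's proof, which records only the first case.
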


\begin{proof}
 Label an arbitrary sink vertex  by $1$. Let $w(j,1)$ be the unique walk from any vertex $j$ to sink vertex $1$. Let $\{ f_{ij}: P_i\rightarrow P_j\ |\ (j\rightarrow i) \in Q_1\}$ be the given set of irreducible morphisms. Let $\{u_{ij},v_{ij}\}$ be the irreducible morphisms in $\mathcal T$ satisfying the mesh relations 
 $(\ref{mesh})$
$$\sum_{(k\rightarrow j) \in Q_1} v_{jk}u_{jk}+\sum_{(j\rightarrow i) \in Q_1}f_{ij}v_{ij}=0.
$$
For each vertex $j \in Q_0$,  define $g^{(\lambda)}_{ij} := \varphi_\lambda (w(j,1))v_{ij}$. 
Next, show that there is an identity: 
$$
\mathop{\sum}\limits_{(k\rightarrow j)}g^{(\lambda)}_{jk}\tau f_{jk}+\lambda\mathop{\sum}\limits_{(j\rightarrow i)} f_{ij}g^{(\lambda)}_{ij}=\lambda\varphi_\lambda(w(j,1)) \left(\mathop{\sum}\limits_{(k\rightarrow j)}v_{jk}u_{jk}+\mathop{\sum}\limits_{(j\rightarrow i)} f_{ij}v_{ij} \right).
$$
In fact, we only need to check the mesh relation ending in $P_j$ for each vertex $j$.\\
Since $g^{(\lambda)}_{ij}=\varphi_\lambda(w(j,1))v_{ij}$ and by Lemma $\ref{cij lem}$ we have $u_{ij}=c_{ij}\tau f_{ij}$, it follows that:
$$
\mathop{\sum}\limits_{(k\rightarrow j)}g^{(\lambda)}_{jk}\tau f_{jk}+\lambda\mathop{\sum}\limits_{(j\rightarrow i)} f_{ij}g^{(\lambda)}_{ij}
=\mathop{\sum}\limits_{(k\rightarrow j)}\varphi_\lambda(w(k,1))v_{jk}c^{-1}_{jk}u_{jk}+\lambda \mathop{\sum}\limits_{(j\rightarrow i)} f_{ij}\varphi_\lambda(w(j,1))v_{ij}.$$
However, since there is an arrow $(k\rightarrow j)$, the walk $w(k,1)$ is the concatenation of the walk $w(j,1)$ by the arrow $(k\rightarrow j)$. Due to Lemma \ref{phi function}, $\varphi_\lambda(w(k,1))=\lambda c_{jk}\varphi_\lambda(w(j,1))$. Plugging into the previous equality, we have
\begin{eqnarray*}
\mathop{\sum}\limits_{(k\rightarrow j)}g^{(\lambda)}_{jk}\tau f_{jk}+\lambda\mathop{\sum}\limits_{(j\rightarrow i)} f_{ij}g^{(\lambda)}_{ij}
&=&\mathop{\sum}\limits_{(k\rightarrow j)}\varphi_\lambda(w(k,1))v_{jk}c^{-1}_{jk}u_{jk}+\lambda \mathop{\sum}\limits_{(j\rightarrow i)} f_{ij}\varphi_\lambda(w(j,1))v_{ij}\\
 &=&\lambda\mathop{\sum}\limits_{(k\rightarrow j)}\varphi_\lambda(w(j,1))v_{jk}u_{jk}+\lambda \mathop{\sum}\limits_{(j\rightarrow i)} f_{ij}\varphi_\lambda(w(j,1))v_{ij}\\
 &=&\lambda\varphi_\lambda(w(j,1)) \left(\mathop{\sum}\limits_{(k\rightarrow j)}v_{jk}u_{jk}+\mathop{\sum}\limits_{(j\rightarrow i)} f_{ij}v_{ij} \right).
\end{eqnarray*} \vspace{-1em}
Therefore $\mathop{\sum}\limits_{(k\rightarrow j) \in Q_1}g^{(\lambda)}_{jk}\tau f_{jk}+\lambda\mathop{\sum}\limits_{(j\rightarrow i) \in Q_1}f_{ij}g^{(\lambda)}_{ij}=0$.

\end{proof}

\begin{thm}
\label{main thm}
Suppose $Q$ is a tree-type quiver and $\{ f_{ij}: P_i\rightarrow P_j\ |\ (j\rightarrow i) \in Q_1\}$ is a set of irreducible morphisms. 
\begin{enumerate}
\item There exist irreducible morphisms $\{g^{(\lambda)}_{ij}\}_{(j \rightarrow i)\in Q_1}$ satisfying 
\begin{eqnarray}\sum_{(k\rightarrow j) \in Q_1}\tau^ng^{(\lambda)}_{jk}\tau^{n+1} f_{jk}+\lambda\sum_{(j\rightarrow i) \in Q_1}\tau^nf_{ij}\tau^ng^{(\lambda)}_{ij}=0.\end{eqnarray} 
 for each $j\in Q_0$ and $n\in \mathbb Z$.
 \item There exist irreducible morphisms $\{g^{(\lambda)}_{ij}\}_{(j \rightarrow i)\in Q_1}$ such that the AR-quiver of $\mathcal T$ can be filled with irreducible morphisms $\tau^nf_{ij}$ and $\tau^ng^{(\lambda)}_{ij}$, 
satisfying the $\lambda$-relations. 
\end{enumerate}
\end{thm}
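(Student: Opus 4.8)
The plan is to obtain both parts directly from Lemma~\ref{main lem} by exploiting the fact that, on the transjective component $\mathcal T$ of $D^b(\bk Q)$, the Auslander--Reiten translation $\tau$ is an \emph{autoequivalence}. This is the key structural point: unlike in the module category, where $\tau$ need not be defined on projectives, here $\tau^n$ is a well-defined $\bk$-linear additive functor $\mathcal T\to\mathcal T$ for \emph{every} $n\in\mathbb Z$, and as an equivalence it carries irreducible morphisms to irreducible morphisms.

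For part (1), I would take the morphisms $g^{(\lambda)}_{ij}\colon \tau P_j\to P_i$ already produced by Lemma~\ref{main lem}; their defining identity is precisely the asserted relation in the case $n=0$. I then simply apply the functor $\tau^n$ to that identity. Since $\tau^n$ is additive, $\bk$-linear, compatible with composition, and sends the zero morphism to zero, it transforms
$$\sum_{(k\to j)} g^{(\lambda)}_{jk}\,\tau f_{jk}+\lambda\sum_{(j\to i)} f_{ij}\,g^{(\lambda)}_{ij}=0$$
into
$$\sum_{(k\to j)} \tau^n g^{(\lambda)}_{jk}\,\tau^{n+1} f_{jk}+\lambda\sum_{(j\to i)} \tau^n f_{ij}\,\tau^n g^{(\lambda)}_{ij}=0,$$
using $\tau^n(\tau f_{jk})=\tau^{n+1}f_{jk}$. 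As $\tau^n$ preserves irreducibility, each $\tau^n f_{ij}$ and $\tau^n g^{(\lambda)}_{ij}$ is again irreducible, so (1) follows with no computation beyond functoriality.

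For part (2), I would argue that the $\mathbb Z$-indexed families $\{\tau^n f_{ij}\}$ and $\{\tau^n g^{(\lambda)}_{ij}\}$ exhaust the arrows of $\Gamma_{\mathcal T}$, and that the relations of part (1) are exactly its mesh ($\lambda$-)relations. Every indecomposable of $\mathcal T$ has the form $\tau^n P_j$, and by the description of irreducible morphisms in the preprojective component (Lemma~\ref{irr}), transported across $\mathcal T$ by the equivalence $\tau$, every arrow of $\Gamma_{\mathcal T}$ is of type $\tau^n f_{ij}\colon \tau^n P_i\to\tau^n P_j$ or of type $\tau^n g^{(\lambda)}_{ij}\colon \tau^{n+1}P_j\to\tau^n P_i$. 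The mesh of $\Gamma_{\mathcal T}$ terminating at $\tau^n P_j$ is then governed by exactly the relation of part (1) for the pair $(n,j)$, so every mesh carries the $\lambda$-relation and the whole AR-quiver is filled consistently.

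The only point that genuinely requires care --- and hence the main obstacle --- is the bookkeeping in part (2): one must match the indexing of arrows and meshes by pairs $(n,j)$ against the two families of Auslander--Reiten triangles recalled in Section~\ref{subsec:derived cat}, checking that the $\lambda$-commutative square centered at $\tau^n P_j$ has the correct source and target objects and that the $\mathbb Z$-indexed relations account for each mesh exactly once. Standardness of $\mathcal T$ (Theorem~\ref{LP2.3}) is what guarantees that these are the full set of mesh relations, i.e.\ that a morphism built from the chosen irreducible maps vanishes precisely when its expression lies in the mesh ideal; with that in hand, part (2) is immediate.
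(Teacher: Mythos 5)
Your proposal is correct and takes essentially the same approach as the paper: part (1) is obtained by applying the autoequivalence $\tau^n$ of $\mathcal T$ to the identity of Lemma~\ref{main lem}, and part (2) is the observation (which the paper dismisses as ``straightforward'') that the vertices and arrows of the AR-quiver of $\mathcal T$ are exhausted by $\tau^n P_j$, $\tau^n f_{ij}$, $\tau^n g^{(\lambda)}_{ij}$, so the relations of part (1) account for every mesh exactly once. The only deviation is your closing appeal to standardness (Theorem~\ref{LP2.3}), which is not needed for this theorem --- it enters only later, in Lemma~\ref{well def}, to decide when a sum of compositions of the chosen irreducible morphisms vanishes; merely exhibiting a filling that satisfies the $\lambda$-relations requires nothing about the mesh ideal.
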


\begin{proof}
Part $(1)$, by Lemma $\ref{main lem}$, there exist irreducible morphisms $\{g^{(\lambda)}_{ij}\}$ such that for each $j$, they satisfy the $\lambda$-relations
$$\sum_{(k\rightarrow j) \in Q_1}g^{(\lambda)}_{jk}\tau f_{jk}+\lambda\sum_{(j\rightarrow i) \in Q_1}f_{ij}g^{(\lambda)}_{ij}=0.$$
Assign each arrow $(\tau^n P_i\rightarrow \tau^n P_j)$ an irreducible morphisms $\tau^nf_{ij}$ and each arrow $(\tau^{n+1}P_j\rightarrow \tau^nP_i)$ an irreducible morphisms $\tau^ng^{(\lambda)}_{ij}$. Since $\tau$ is an equivalence functor on $\mathcal T$, we have 
$$\sum_{(k\rightarrow j) \in Q_1}\tau^ng^{(\lambda)}_{jk}\tau^{n+1} f_{jk}+\lambda\sum_{(j\rightarrow i) \in Q_1}\tau^nf_{ij}\tau^ng^{(\lambda)}_{ij}=0.
$$
Part $(2)$ is straightforward.
\end{proof}
%
%
\begin{cor}
\label{sign diff 2}
$(a)$ There exist irreducible morphisms $\{g^{(1)}_{ij}\}_{(j \rightarrow i)\in Q_1}$ such that the AR-quiver of $\mathcal T$ can be filled with irreducible morphisms $\tau^nf_{ij}$ and $\tau^ng^{(1)}_{ij}$, for $n\in \mathbb Z$, satisfying the mesh relations for each $j\in Q_0$,
\begin{eqnarray}\sum_{(k\rightarrow j) \in Q_1}\tau^ng^{(1)}_{jk}\tau^{n+1} f_{jk}+\sum_{(j\rightarrow i) \in Q_1}\tau^nf_{ij}\tau^ng^{(1)}_{ij}=0.\label{mesh relation 2}\end{eqnarray} 
$(b)$  There exist irreducible morphisms $\{g^{(-1)}_{ij}\}_{(j \rightarrow i)\in Q_1}$ such that the AR-quiver of $\mathcal T$ can be filled with irreducible morphisms $\tau^nf_{ij}$ and $\tau^ng^{(-1)}_{ij}$, for $n\in\mathbb Z$, satisfying the commutativity relations for  each $j\in Q_0$,
\begin{eqnarray}\sum_{(k\rightarrow j) \in Q_1}\tau^ng^{(-1)}_{jk}\tau^{n+1} f_{jk}-\sum_{(j\rightarrow i) \in Q_1}\tau^nf_{ij}\tau^ng^{(-1)}_{ij}=0.\label{comm2}\end{eqnarray} 
$(c)$ It follows that $\tau^ng^{(1)}_{ij}=(-1)^{|w(j,1)|} \tau^ng^{(-1)}_{ij}$, where $w(j,1)$ is the unique walk from any vertex $j$ to a fixed sink vertex $1$.
\end{cor}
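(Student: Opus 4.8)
The plan is to observe that parts $(a)$ and $(b)$ are nothing more than the specializations $\lambda=1$ and $\lambda=-1$ of Theorem~\ref{main thm}, so they require no further argument: taking $\lambda=1$ produces morphisms $\{g^{(1)}_{ij}\}$ satisfying the mesh relations $(\ref{mesh relation 2})$, and taking $\lambda=-1$ produces $\{g^{(-1)}_{ij}\}$ satisfying the commutativity relations $(\ref{comm2})$. The only genuine content is part $(c)$, the sign comparison between the two families.

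For $(c)$, the key point I would emphasize is that both families are built from the \emph{same} underlying irreducible morphisms $\{v_{ij}\}$ fixed in Step 1, which do not depend on $\lambda$. Unwinding the construction in the proof of Lemma~\ref{main lem}, we have by definition
$$
g^{(\lambda)}_{ij} = \varphi_\lambda(w(j,1))\, v_{ij}
$$
for every arrow $(j\rightarrow i)$, where the coefficients $c_{ij}$ entering $\varphi_\lambda$ are determined once and for all by the fixed choice of $\{u_{ij},v_{ij}\}$. In particular $g^{(1)}_{ij}$ and $g^{(-1)}_{ij}$ are the same morphism $v_{ij}$ scaled by $\varphi_1(w(j,1))$ and $\varphi_{-1}(w(j,1))$ respectively.

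Next I would invoke Lemma~\ref{phi function}$(2)$, which records exactly the relation $\varphi_{-1}(w)=(-1)^{|w|}\varphi_1(w)$. Applying this to $w=w(j,1)$ and comparing the two scalar multiples of $v_{ij}$ gives
$$
g^{(1)}_{ij} = (-1)^{|w(j,1)|}\, g^{(-1)}_{ij},
$$
since $(-1)^{|w(j,1)|}$ is its own inverse. Finally, because $\tau$ is an equivalence on $\mathcal T$ and therefore acts $\bk$-linearly on morphisms, applying $\tau^n$ to both sides preserves the scalar and yields $\tau^n g^{(1)}_{ij}=(-1)^{|w(j,1)|}\,\tau^n g^{(-1)}_{ij}$, as claimed.

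I do not expect any real obstacle: the entire statement is a bookkeeping consequence of the explicit formula $g^{(\lambda)}_{ij}=\varphi_\lambda(w(j,1))v_{ij}$ together with Lemma~\ref{phi function}$(2)$. The one point that must be checked carefully — and the only place an error could creep in — is that the \emph{same} set $\{v_{ij}\}$ (equivalently, the same coefficients $c_{ij}$) is used to define both $g^{(1)}_{ij}$ and $g^{(-1)}_{ij}$; otherwise the two scalings would compare unrelated morphisms and the clean sign relation would fail.
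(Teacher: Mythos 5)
Your proposal is correct and follows exactly the paper's own argument: parts $(a)$ and $(b)$ are cited as the $\lambda=\pm 1$ specializations of Theorem~\ref{main thm}, and part $(c)$ is deduced from the formula $g^{(\lambda)}_{ij}=\varphi_\lambda(w(j,1))v_{ij}$ together with Lemma~\ref{phi function}$(2)$, then applying $\tau^n$. Your closing remark that both families must be built from the \emph{same} $\{v_{ij}\}$ is precisely the implicit hypothesis the paper relies on (the two sets are ``obtained simultaneously''), so nothing is missing.
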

\begin{proof} $(a)$ and $(b)$ are special cases of Theorem \ref{main thm}, when $\lambda=1$ or $\lambda=-1$.\\
$(c)$ $g^{(1)}_{ij}=\varphi_1(w(j,1))v_{ij}=(-1)^{|w(j,1)|}\varphi_{-1}(w(j,1))v_{ij}$ by definition and Lemma \ref{phi function}.
Also by definition $g^{(-1)}_{ij}=\varphi_{-1}(w(j,1))v_{ij}$, and after applying $\tau^n$, the statement $(c)$ follows.
\end{proof}
\begin{defn}
\label{hg}
We denote the set of irreducible morphisms obtained in Corollary $\ref{sign diff 2} \, (a)$ by $\widetilde{\mathcal H}=\{\tau^n f_{ij}, \tau^n g^{(1)}_{ij} \}$ and the set of irreducible morphisms obtained in Corollary $\ref{sign diff 2} \, (b)$ by $\mathcal H=\{\tau^n f_{ij}, \tau^n g^{(-1)}_{ij}\}$.
\end{defn}

Since $\dms_\bk \Irr(M,N)=1$, for all indecomposable objects $M, N \in \mathcal T$, we observe that $\widetilde{\mathcal H}$ or $\mathcal H$ exhausts all the irreducible morphisms up to scalar multiplications. 

Moreover, recall that $\mathcal T$ is standard by Theorem \ref{LP2.3}. By Definition~$\ref{standard}$, $\mathcal T$ is standard given the existence of a functor $F$ between the mesh category $\Mesh(\mathcal T)$ and the full subcategory $\ind \mathcal T$ of $\mathcal T$ consisting only the representatives of indecomposable objects in $\mathcal T$. Indeed, the set $\widetilde{\mathcal H}$ gives rise to this functor $F$: 
by the proof of \cite[Theorems 1.4]{LP}, if we assign each arrow in the AR-quiver with an irreducible morphism such that they satisfy the mesh relations, then we can define a $\bk$-equivalence functor $F: \Mesh(\mathcal T) \rightarrow \mathcal \ind\, \mathcal T$ by 
$$
F(\tau^n P_i\rightarrow \tau^n P_j)=\tau^n f_{ij} \quad \text{ and } \quad F(\tau^{n+1}P_j\rightarrow \tau^nP_i )=\tau^n g^{(1)}_{ij}.
$$
Note that by definition, the mesh ideal which defines $\Mesh(\mathcal T)$ is given by 
$$
m_{\tau^n P_j}=\mathop{\sum}\limits_{(\alpha:\, \tau^n P_i \rightarrow \tau^n P_j)}\alpha\sigma(\alpha)+\mathop{\sum}\limits_{(\beta:\, \tau^{n+1} P_k \rightarrow \tau^n P_j)}\beta\sigma(\beta).
$$
One can check that under $F$, the mesh ideal $m_{\tau^nP_j}$ corresponds to the mesh relation $(\ref{mesh relation 2})$.

We now prove an important result which is used in the proof of main Theorem \ref{thm:alg isom}. 

\begin{lem}\label{well def}
Let $Q$ be a tree-type quiver and $\mathcal T$ be the transjective component of $D^b(\bk Q)$. 
Any morphism $f$ in $\mathcal T$ can be written as a sum of compositions of irreducible morphisms $h_{t,i} \in \mathcal H$: 
$$f=\mathop{\sum}\limits_{t}a_t  h_{t,m_t}\circ\cdots\circ  h_{t,1}.$$ 
Then $f=0$ if and only if $\mathop{\sum}\limits_{t}a_t  h_{t,m_t}\circ\cdots\circ h_{t,1}$ is generated by commutativity relations $(\ref{comm2})$.
\end{lem}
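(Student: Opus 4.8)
The plan is to leverage the standardness of $\mathcal T$ (Theorem~\ref{LP2.3}) and to transport the statement, which is essentially known for the mesh relations attached to $\widetilde{\mathcal H}$, over to the commutativity relations attached to $\mathcal H$ by means of the explicit scalar comparison in Corollary~\ref{sign diff 2}(c).

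First I would dispose of the existence of the expression. Since $\mathcal T$ is standard, the set $\widetilde{\mathcal H}$ induces a $\bk$-equivalence $F\colon \Mesh(\mathcal T)\to \ind\,\mathcal T$ acting as the identity on objects. In particular $F$ is full, so every morphism $f$ of $\mathcal T$ is a $\bk$-linear combination of compositions of morphisms in $\widetilde{\mathcal H}$. By Corollary~\ref{sign diff 2}(c) each $\tau^n g^{(1)}_{ij}$ is a nonzero scalar multiple of $\tau^n g^{(-1)}_{ij}$, while the $\tau^n f_{ij}$ are common to both sets; absorbing these scalars into the coefficients rewrites $f$ as a combination of compositions of morphisms in $\mathcal H$, as required.

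For the equivalence I would introduce the path category $\mathcal P$ of the AR-quiver of $\mathcal T$, whose arrows are abstract symbols $F_{ij}$, $G_{ij}$ (and their $\tau$-translates) corresponding to the $f$- and $g$-type arrows. Let $R,\tilde R\colon\mathcal P\to \mathcal T$ be the two realization functors sending $G_{ij}$ to $g^{(-1)}_{ij}$, respectively $g^{(1)}_{ij}$, and both sending $F_{ij}$ to $f_{ij}$. By standardness $\tilde R$ descends to the equivalence $F$; being faithful, $\tilde R(\mathcal E)=0$ in $\mathcal T$ if and only if $\mathcal E$ lies in the mesh ideal, that is, is generated by the mesh relations~(\ref{mesh relation 2}). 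Next, define the automorphism $\Phi$ of $\mathcal P$ that fixes objects and the $F$-arrows and scales each $G$-arrow $G_{ij}$ (together with all its $\tau$-translates) by $(-1)^{|w(j,1)|}$; then $\tilde R=R\circ\Phi$ by Corollary~\ref{sign diff 2}(c). Writing the given $\mathcal H$-expression for $f$ as $f=R(\mathcal E)$ with $\mathcal E\in\mathcal P$, we have $f=\tilde R(\Phi^{-1}\mathcal E)$, so $f=0$ if and only if $\Phi^{-1}\mathcal E$ lies in the mesh ideal, equivalently $\mathcal E\in\Phi(\text{mesh ideal})$.

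It therefore remains to identify $\Phi(\text{mesh ideal})$ with the ideal generated by the commutativity relations~(\ref{comm2}); this is the heart of the argument and the step requiring the most care. Applying $\Phi$ to a mesh generator $m_{\tau^n P_j}$ and using that for every arrow $(k\rightarrow j)$ the walk $w(k,1)$ extends $w(j,1)$ by one step, so that $|w(k,1)|=|w(j,1)|+1$, flips the relative sign of the two sums from $+$ to $-$ up to the common factor $(-1)^{|w(j,1)|}$; hence $\Phi(m_{\tau^n P_j})$ is a nonzero scalar multiple of the commutativity generator~(\ref{comm2}). Since $\Phi$ is an automorphism of $\mathcal P$, it carries the two-sided mesh ideal onto the two-sided commutativity ideal, and the chain of equivalences above yields the claim. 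The main obstacle is precisely this bookkeeping: one must check that the sign comparison holds simultaneously for all generators $m_{\tau^n P_j}$ and is compatible with pre- and post-composition, so that it upgrades from an identity between individual generators to an equality of the ideals they generate.
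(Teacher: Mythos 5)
Your proof is correct and takes essentially the same route as the paper's: both rest on standardness of $\mathcal T$ (Theorem~\ref{LP2.3}) applied to $\widetilde{\mathcal H}$ together with the sign comparison of Corollary~\ref{sign diff 2}(c), which turns each mesh generator into a nonzero scalar multiple of the corresponding commutativity generator, your path-category/automorphism $\Phi$ being just a more formal packaging of the paper's direct substitution. One shared caveat: the claim that $|w(k,1)|=|w(j,1)|+1$ for an arrow $(k\rightarrow j)$ can fail (the walk from $j$ to $1$ may instead pass through $k$), but in either case the two lengths differ by exactly one, so the parity flip --- which is all your argument and the paper's actually use --- still holds.
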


\begin{proof}
Let $\widetilde{\mathcal H}$ and $\mathcal H$ be sets of irreducible morphisms obtained simultaneously as in 
Corollary~$\ref{sign diff 2}$ and defined in Definition $\ref{hg}$.
%

Assume $f$ can be written as a sum of compositions of irreducible morphisms 
$$f=\mathop{\sum}\limits_{t}c_t  \tilde h_{t,m_t}\circ\cdots\circ  \tilde h_{t,1},$$ 
where $ \tilde h_{t,i}$ is in the set $\widetilde{\mathcal H}$. 
 Then $f=0$ if and only if $\mathop{\sum}\limits_{t}c_t  \tilde h_{t,m_t}\circ\cdots\circ  \tilde h_{t,1}$  satisfies the mesh relations $(\ref {mesh relation 2})$. That is, $f=\mathop{\sum}\limits_{t}c_t  \tilde h_{t,m_t}\circ\cdots\circ  \tilde h_{t,1}$ is generated by $$\mathop{\sum}\limits_{(k\rightarrow j) \in Q_1}\tau^ng^{(1)}_{jk}\tau^{n+1} f_{jk}+\mathop{\sum}\limits_{(j\rightarrow i) \in Q_1}\tau^nf_{ij}\tau^ng^{(1)}_{ij}.$$

However, according to Corollary $\ref{sign diff 2} \, (c)$,
 \begin{eqnarray*}
 &&\mathop{\sum}\limits_{(k\rightarrow j) \in Q_1}\tau^ng^{(1)}_{jk}\tau^{n+1} f_{jk}+\mathop{\sum}\limits_{(j\rightarrow i) \in Q_1}\tau^nf_{ij}\tau^ng^{(1)}_{ij}\\
 &=&\mathop{\sum}\limits_{(k\rightarrow j) \in Q_1} \left((-1)^{|w(k,1)|} \ \tau^ng^{(-1)}_{jk}\tau^{n+1} f_{jk} \right) + (-1)^{|w(j,1)|}\mathop{\sum}\limits_{(j\rightarrow i) \in Q_1}\tau^nf_{ij}\tau^ng^{(-1)}_{ij}\\
 &=&(-1)^{|w(j,1)|+1} \left(\mathop{\sum}\limits_{(k\rightarrow j) \in Q_1}\tau^ng^{(-1)}_{jk}\tau^{n+1} f_{jk}-\mathop{\sum}\limits_{(j\rightarrow i) \in Q_1}\tau^nf_{ij}\tau^ng^{(-1)}_{ij} \right).
 \end{eqnarray*}
Hence, $f=0$ if and only if $f$ is generated by $\mathop{\sum}\limits_{(k\rightarrow j) \in Q_1}\tau^ng^{(-1)}_{jk}\tau^{n+1} f_{jk} \ - \mathop{\sum}\limits_{(j\rightarrow i) \in Q_1}\tau^nf_{ij}\tau^ng^{(-1)}_{ij}$, which are the commutativity relations $(\ref{comm2})$.
 \end{proof}

Theorem \ref{main thm} is stated in a constructive way, and we will explain how to construct the coefficients $\varphi_{-1}(w(j,1))$ step by step in Example \ref{example}. Before that, we need the following result:
\begin{lem}\label{modify}
Suppose we have the following exact sequence:
$$
\xymatrix{0\ar[r]&A\ar[rr]^{(f_1, f_2, \cdots, f_n)^T}&&B\ar[rr]^{(g_1,g_2,\cdots, g_n)}&&C\ar[r]&0.}
$$
Then for $c_i, c\in \bk^\times$ invertible, we have the following exact sequences:

$(a) \quad
\xymatrix @C=2.2pc {0\ar[r]&A\ar[rrr]^{(c_1^{-1}f_1, c_2^{-1}f_2, \cdots, c_n^{-1}f_n)^T}&&&B\ar[rrr]^{(c_1g_1,c_2g_2,\cdots, c_ng_n)}&&&C\ar[r]&0}
$

$(b) \quad
\xymatrix @C=2pc {0\ar[r]&A\ar[rr]^{(f_1, f_2, \cdots,  cf_n)^T}&&B\ar[rrr]^{(cg_1,cg_2,\cdots, cg_{n-1}, g_n)}&&&C\ar[r]&0}
$
 
$(c) \quad
\xymatrix @C=2pc {0\ar[r]&A\ar[rr]^{(cf_1, cf_2, \cdots,  cf_n)^T}&&B\ar[rrr]^{(g_1,g_2,\cdots, g_{n-1}, g_n)}&&&C\ar[r]&0}.
$
\end{lem}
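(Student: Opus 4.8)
Lemma \ref{modify} asks us to show that three specific rescalings of the maps in a short exact sequence again yield short exact sequences.

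The plan is to treat each of the three sequences as arising from the original by composing the outer terms with invertible isomorphisms, so that exactness is preserved automatically. Concretely, I would introduce diagonal (or scalar) automorphisms of the middle term $B$ and, where needed, of $A$ and $C$, and check that each new sequence fits into a commutative ladder whose vertical maps are all isomorphisms; a short exact sequence isomorphic to a short exact sequence is again short exact. This avoids recomputing kernels, images, and cokernels by hand.

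For part $(a)$, I would write $B \cong \bigoplus_{i} B_i$ according to the components of the maps and let $D = \operatorname{diag}(c_1,\dots,c_n)$ be the automorphism of $B$ scaling the $i$-th summand by $c_i$. Then $(c_1^{-1}f_1,\dots,c_n^{-1}f_n)^T = D^{-1}\circ (f_1,\dots,f_n)^T$ and $(c_1g_1,\dots,c_ng_n) = (g_1,\dots,g_n)\circ D$, so the new sequence is obtained from the original by replacing the identity on $B$ with the commuting square built from $D$; since $D$ is an isomorphism, the diagram
\begin{center}
\begin{tikzcd}
0 \arrow{r} & A \arrow{r} \arrow[equal]{d} & B \arrow{r} \arrow{d}{D} & C \arrow{r} \arrow[equal]{d} & 0 \\
0 \arrow{r} & A \arrow{r} & B \arrow{r} & C \arrow{r} & 0
\end{tikzcd}
\end{center}
commutes with the new sequence on top, and exactness transfers. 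For part $(b)$ I would instead take the automorphism $D' = \operatorname{diag}(c,\dots,c,1)$ of $B$ (scaling the first $n-1$ summands by $c$) and verify by the same commuting-square argument that the displayed maps factor through $D'$; note the last component $cf_n$ versus the others forces the scalar to sit on a single summand of $A$ as well, so one checks $(f_1,\dots,cf_n)^T = D'^{-1}\circ(\cdots)$ after also adjusting by a scalar on $A$, which I would record explicitly. Part $(c)$ is the simplest: scaling all of $f_1,\dots,f_n$ by $c$ is just precomposition of the left map with the automorphism $c\cdot\mathrm{id}_A$, leaving $B$ and the right map untouched.

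The main obstacle, such as it is, lies in part $(b)$: the asymmetry between the first $n-1$ scaled components and the unscaled $g_n$ on the right, paired with the scaled $cf_n$ on the left, means I cannot use a single automorphism of $B$ alone — I must bookkeep a compensating scalar on $A$ (or equivalently split the verification across the two maps) to make the ladder commute. Once the correct diagonal automorphisms are pinned down, each claim reduces to the trivial fact that conjugating or composing a short exact sequence by isomorphisms preserves exactness, so no kernel or cokernel computation is needed.
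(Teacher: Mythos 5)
Your proposal is correct and takes essentially the same approach as the paper: the paper proves part $(a)$ by exhibiting a commutative ladder (an isomorphism of complexes) with the diagonal automorphism $\operatorname{diag}(c_1^{-1},\dots,c_n^{-1})$ on $B$ and identities on $A$ and $C$, then transfers exactness, declaring $(b)$ and $(c)$ to follow by similar arguments. Your explicit bookkeeping for $(b)$ --- a diagonal automorphism of $B$ together with a compensating scalar on $A$ (equivalently, one could put the scalar on $C$) --- and the automorphism $c\cdot\mathrm{id}_A$ for $(c)$ are precisely those similar arguments spelled out.
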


\begin{proof}
Here we will just prove part $(a)$, the other parts follow by similar argument. The isomorphism between two complexes:
$$
\xymatrix @R=1.05pc @C=2.4pc {0\ar[r]&A\ar@{=}[ddd]\ar[rrr]^{(f_1, f_2, \cdots, f_n)^T}&&&B\ar[ddd]^{\left(\begin{smallmatrix}
c_1^{-1}&&0\\
&\ddots&\\
0&&c_n^{-1}
\end{smallmatrix}\right)
} \ar[rrr]^{(g_1,g_2,\cdots, g_n)}&&&C\ar[r]\ar@{=}[ddd]&0\\
\\
\\
0\ar[r]&A\ar[rrr]^{(c_1^{-1}f_1, c_2^{-1}f_2, \cdots, c_n^{-1}f_n)^T}&&&B\ar[rrr]^{(c_1g_1,c_2g_2,\cdots, c_ng_n)}&&&C\ar[r]&0}
$$
induces the isomorphism between the corresponding homologies. So the upper sequence is exact if and only if so is the lower sequence.
\end{proof}

\begin{exm}
\label{example}
\normalfont
Suppose $Q$ is the following quiver:
$$
\begin{tikzpicture}[->]
\node(1) at (-1,-0.5) {$1$};
\node (2) at (0,0) {$2$};
\node (5) at (2,0.5) {$5$};
\node (4) at (0,1) {$4$};
\node (3) at (1, 0.5) {$3$};
\node (6) at (1,1.5) {$6$};
\node (7) at (2,2) {$7$};
\node (8) at (1,2.5) {$8$};

\draw (2)--(1);
\draw (3)--(2);
\draw (3)--(4);
\draw (5)--(3);
\draw (6)--(4);
\draw (7)--(6);
\draw (7)--(8);
\end{tikzpicture}
$$

Suppose we have filled the translation quiver with the irreducible morphisms $\{u_{ij}, v_{ij} \}$ which are constructed in Section~$\ref{subsec:filling}$. Remember that we have $c_{ij}\tau f_{ij}=u_{ij}$ by Lemma ~\ref{cij lem}.
$$
\tiny{\begin{tikzpicture}[->]
\node(1) at (-1,-1) {$P_1$};
\node (2) at (0,-0.5) {$P_2$};
\node (5) at (2,0) {$P_5$};
\node (4) at (0,.5) {$P_4$};
\node (3) at (1, 0) {$P_3$};
\node (6) at (1,1) {$P_6$};
\node (7) at (2,1.5) {$P_7$};
\node (8) at (1,2) {$P_8$};

\draw (1)--node[below] {\ \ $f_{12}$}(2) ;
\draw (2)--(3);
\draw (4)--(3);
\draw (3)--(5);
\draw (4)--(6);
\draw (6)--(7);
\draw (8)--(7);

\node(t1) at (-3,-1) {$\tau P_1$};
\node (t2) at (-2,-0.5) {$\tau P_2$};
\node (t5) at (0,0) {$\tau P_5$};
\node (t4) at (-2,.5) {$\tau P_4$};
\node (t3) at (-1, 0) {$\tau P_3$};
\node (t6) at (-1,1) {$\tau P_6$};
\node (t7) at (0,1.5) {$\tau P_7$};
\node (t8) at (-1,2) {$\tau P_8$};

\draw (t1)--node[below]{\ \ $u_{12}$}(t2);  
\draw (t2)--(t3);
\draw (t4)--(t3);
\draw (t3)--(t5);
\draw (t4)--(t6);
\draw (t6)--(t7);
\draw (t8)--(t7);

\draw (t2)--node[above]{$\ \ v_{12}$}(1); 
\draw (t3)--(2);
\draw (t3)--(4);
\draw (t5)--(3);
\draw (t6)--(4); 
\draw (t7)--(6);
\draw (t7)--(8);
\end{tikzpicture}}
$$

We start filling irreducible morphisms from $\tau P_1$. First, we have an exact sequence:
$$
\xymatrix{0\ar[r]&\tau P_1\ar[rr]^{u_{12}=c_{12}\tau f_{12}\ \ }&&\tau P_2\ar[rr]^(0.55){v_{12}}&& P_1\ar[r]&0}.
$$
By Lemma~$\ref{modify} \, (a)$, we have an exact sequence
$$
\xymatrix{0\ar[r]&\tau P_1\ar[rr]^{\tau f_{12}\ \ }&&\tau P_2\ar[rr]^(0.55){c_{12}v_{12}}&& P_1\ar[r]&0}.
$$


Let $g^{(-1)}_{12}:=c_{12}v_{12}$, the bottom square involving $P_1$ satisfies the commutativity relation.
$$
\tiny\begin{tikzpicture}[->]
\node(1) at (-1,-1) {$P_1$};
\node (2) at (0,-0.5) {$P_2$};
\node (5) at (2,0) {$P_5$};
\node (4) at (0,.5) {$P_4$};
\node (3) at (1, 0) {$P_3$};
\node (6) at (1,1) {$P_6$};
\node (7) at (2,1.5) {$P_7$};
\node (8) at (1,2) {$P_8$};

\draw (1)--node[below] {\ \ $f_{12}$}(2) ;
\draw (2)--(3);
\draw (4)--(3);
\draw (3)--(5);
\draw (4)--(6);
\draw (6)--(7);
\draw (8)--(7);

\node(t1) at (-3,-1) {$\tau P_1$};
\node (t2) at (-2,-0.5) {$\tau P_2$};
\node (t5) at (0,0) {$\tau P_5$};
\node (t4) at (-2,.5) {$\tau P_4$};
\node (t3) at (-1, 0) {$\tau P_3$};
\node (t6) at (-1,1) {$\tau P_6$};
\node (t7) at (0,1.5) {$\tau P_7$};
\node (t8) at (-1,2) {$\tau P_8$};

\draw[dashed] (t1)--node[below]{\ \ $\tau f_{12}$}(t2);  
\draw (t2)--(t3);
\draw (t4)--(t3);
\draw (t3)--(t5);
\draw (t4)--(t6);
\draw (t6)--(t7);
\draw (t8)--(t7);

\draw[dashed] (t2)--node[above]{$\ \ g^{(-1)}_{12}$}(1); 
\draw (t3)--(2);
\draw (t3)--(4);
\draw (t5)--(3);
\draw (t6)--(4); 
\draw (t7)--(6);
\draw (t7)--(8);
\end{tikzpicture}
$$
Now, since we have exact sequence by Step 1
$$
\xymatrix{0\ar[r]& \tau P_2\ar[rr]^{(u_{23}, v_{12})^T}&& \tau P_3\oplus P_1\ar[rr]^{\ \ \ (v_{23}, f_{12})} &&P_2\ar[r]&0}
$$
that is,
$$
\xymatrix{0\ar[r]& \tau P_2\ar[rr]^{(c_{23}\tau f_{23}, v_{12})^T}&& \tau P_3\oplus P_1\ar[rr]^{\ \ \ (v_{23}, f_{12})} &&P_2\ar[r]&0}.
$$
By Lemma~$\ref{modify} \, (c)$, we have an exact sequence
$$
\xymatrix{0\ar[r]& \tau P_2\ar[rrr]^{(c_{12}c_{23}\tau f_{23}, c_{12}v_{12})^T}&&& \tau P_3\oplus P_1\ar[rr]^{\ \ \ (v_{23}, f_{12})} &&P_2\ar[r]&0}.
$$
By Lemma~$\ref{modify} \, (a)$, we have an exact sequence
$$
\xymatrix{0\ar[r]& \tau P_2\ar[rrr]^{(\tau f_{23}, c_{12}v_{12})^T}&&& \tau P_3\oplus P_1\ar[rr]^{\ \ \ (c_{12}c_{23}v_{23}, f_{12})} &&P_2\ar[r]&0},
$$
where $c_{12}v_{12}=g^{(-1)}_{12}$ as we constructed before. 
Let $g^{(-1)}_{23}:=-c_{12}c_{23}v_{23}$. The second square involving $P_2$ satisfies the commutativity relation. 
$$
\tiny\begin{tikzpicture}[->]
\node(1) at (-1,-1) {$P_1$};
\node (2) at (0,-0.5) {$P_2$};
\node (5) at (2,0) {$P_5$};
\node (4) at (0,.5) {$P_4$};
\node (3) at (1, 0) {$P_3$};
\node (6) at (1,1) {$P_6$};
\node (7) at (2,1.5) {$P_7$};
\node (8) at (1,2) {$P_8$};

\draw[dashed] (1)--node[below] {\ \ $f_{12}$}(2) ;
\draw (2)--(3);
\draw (4)--(3);
\draw (3)--(5);
\draw (4)--(6);
\draw (6)--(7);
\draw (8)--(7);

\node(t1) at (-3,-1) {$\tau P_1$};
\node (t2) at (-2,-0.5) {$\tau P_2$};
\node (t5) at (0,0) {$\tau P_5$};
\node (t4) at (-2,.5) {$\tau P_4$};
\node (t3) at (-1, 0) {$\tau P_3$};
\node (t6) at (-1,1) {$\tau P_6$};
\node (t7) at (0,1.5) {$\tau P_7$};
\node (t8) at (-1,2) {$\tau P_8$};

\draw[dashed] (t1)--(t2);  
\draw[dashed] (t2)--node[above]{$\tau f_{23}\ \ \ \ $}(t3);
\draw (t4)--(t3);
\draw (t3)--(t5);
\draw (t4)--(t6);
\draw (t6)--(t7);
\draw (t8)--(t7);

\draw[dashed] (t2)--node[above]{$\ \ g^{(-1)}_{12}$}(1); 
\draw [dashed] (t3)--node[below]{$g^{(-1)}_{23}\ \ $}(2);
\draw (t3)--(4);
\draw (t5)--(3);
\draw (t6)--(4); 
\draw (t7)--(6);
\draw (t7)--(8);
\end{tikzpicture}
$$

Next we have an exact sequence
$$
\xymatrix{0\ar[r]& \tau P_3\ar[rr]^(0.4){(v_{43}, u_{35}, v_{23})^T}&& P_4\oplus \tau P_5\oplus P_2\ar[rr]^{\ \ \ (f_{43}, v_{35}, f_{23} )} &&P_3\ar[r]&0}
$$
that is,
$$
\xymatrix{0\ar[r]& \tau P_3\ar[rr]^(0.4){(v_{43}, c_{35}\tau f_{35}, v_{23})^T}&& P_4\oplus \tau P_5\oplus P_2\ar[rr]^{\ \ \ \ (f_{43}, v_{35}, f_{23} )} &&P_3\ar[r]&0}.
$$
By Lemma~$\ref{modify} \, (c)$, we have an exact sequence
$$
\xymatrix{0\ar[r]& \tau P_3\ar[rrrrr]^(0.45){(-c_{12}c_{23}v_{43}, -c_{12}c_{23}c_{35}\tau f_{35}, -c_{12}c_{23}v_{23})^T}&&&&& P_4\oplus \tau P_5\oplus P_2\ar[rr]^{\ \ \ \ (f_{43}, v_{35}, f_{23} )} &&P_3\ar[r]&0}.
$$
By Lemma~$\ref{modify} \, (a)$, we have an exact sequence
$$
\xymatrix{0\ar[r]& \tau P_3\ar[rrrr]^(0.4){(-c_{12}c_{23}v_{43}, \tau f_{35}, -c_{12}c_{23}v_{23})^T}&&&& P_4\oplus \tau P_5\oplus P_2\ar[rrr]^{\ \ \ \ (f_{43}, -c_{12}c_{23}c_{35}v_{35}, f_{23} )} &&&P_3\ar[r]&0},
$$
where $c_{12}c_{23}v_{23}=g^{(-1)}_{23}$ by our previous construction. 
Let $g^{(-1)}_{43}:=c_{12}c_{23}v_{43}$,  and let $g^{(-1)}_{35}:=-c_{35}c_{12}c_{23}v_{35}$. The next square involving $P_3$ satisfies the commutativity relation. Notice that $f_{35}g^{(-1)}_{35}=0$. So the commutativity relation at the square involving $P_5$ is also satisfied.
$$
\tiny\begin{tikzpicture}[->]
\node(1) at (-1,-1) {$P_1$};
\node (2) at (0,-0.5) {$P_2$};
\node (5) at (2,0) {$P_5$};
\node (4) at (0,1) {$P_4$};
\node (3) at (1, 0) {$P_3$};
\node (6) at (1,1.5) {$P_6$};
\node (7) at (2,2) {$P_7$};
\node (8) at (1,2.5) {$P_8$};

\draw[dashed] (1)--(2) ;
\draw[dashed] (2)--node[below]{$f_{23}$}(3);
\draw[dashed] (4)--node[above]{$f_{43}$}(3);
\draw[dashed] (3)--node[above]{$f_{35}$}(5);
\draw (4)--(6);
\draw (6)--(7);
\draw (8)--(7);

\node(t1) at (-3,-1) {$\tau P_1$};
\node (t2) at (-2,-0.5) {$\tau P_2$};
\node (t5) at (0,0) {$\tau P_5$};
\node (t4) at (-2,1) {$\tau P_4$};
\node (t3) at (-1, 0) {$\tau P_3$};
\node (t6) at (-1,1.5) {$\tau P_6$};
\node (t7) at (0,2) {$\tau P_7$};
\node (t8) at (-1,2.5) {$\tau P_8$};

\draw[dashed] (t1)--(t2);  
\draw[dashed] (t2)--(t3);
\draw (t4)--(t3);
\draw [dashed](t3)--node[above]{$\tau f_{35}$}(t5);
\draw (t4)--(t6);
\draw (t6)--(t7);
\draw (t8)--(t7);

\draw[dashed] (t2)--(1); 
\draw [dashed] (t3)--node[below]{$g^{(-1)}_{23}\ \ $}(2);
\draw[dashed] (t3)--node[above]{$g^{(-1)}_{43}$}(4);
\draw [dashed] (t5)--node[above]{$g^{(-1)}_{35}$} (3);
\draw (t6)--(4); 
\draw (t7)--(6);
\draw (t7)--(8);
\end{tikzpicture}
$$
Continue this procedure and we can fill the irreducible morphisms as desired, such that they satisfy commutativity relations.

\end{exm}


\subsection{A non-tree-type case}
\label{subsec:non-tree}
As we mentioned before, for some non-tree-type quivers, we may not be able to find irreducible morphisms $\{f_{ij}:P_i\rightarrow P_j\}$ and $\{g_{ij}^{(1)}:\tau P_j\rightarrow P_i\}$ such that $\{\tau^n f_{ij}, \tau^n g_{ij}^{(1)}\}$ satisfy the mesh relations for all $n$. We illustrate an example here:

\begin{exm}\label{ex:non-tree} \normalfont
Suppose $\bk$ is an algebraically closed field with char $\bk\neq2$. Let $Q$ be:
$$\begin{tikzpicture}[->]
\node (3) at (0,0) {3};
\node (2) at (1,0.7) {2};
\node (1) at (2,0) {1};

\draw (3)--(2);
\draw(2)--(1);
\draw(3)--(1);
\end{tikzpicture}$$

The starting part of the Auslander-Reiten quiver is given by the following picture. \\

\hspace{0.3in}
$\begin{tikzpicture}[->]
\node(P1) at (-2,1.5) {$\begin{smallmatrix} 1 \end{smallmatrix}$};
\node(P2) at (-.5,3) {$\begin{smallmatrix} 2\\1 \end{smallmatrix}$};
\node(P3) at(1,4.5) {$\begin{smallmatrix} &&3&\\&2&&1\\1  \end{smallmatrix}$};
\node(tP1a) at (2.5,6) {$\begin{smallmatrix} &&3&&2\\&2&&1\\1  \end{smallmatrix}$};
\node(P3A) at (-.5,0) {$\begin{smallmatrix} &&3&\\&2&&1\\1  \end{smallmatrix}$};
\node(tP1) at (1,1.5) {$\begin{smallmatrix} &&3&&2\\&2&&1\\1  \end{smallmatrix}$};
\node(tP2) at (2.5,3) {$\begin{smallmatrix} &&&&&3\\&&3&&2&&1\\&2&&1\\1  \end{smallmatrix}$};
\node(tP3) at (4,4.5) {$\begin{smallmatrix} &&&&&3&&2\\&&3&&2&&1\\&2&&1\\1  \end{smallmatrix}$};

\draw (P1)--(P2);
\draw (P1)--(P3A);
\draw (P2)--(tP1);
\draw (P2)--(P3);
\draw (P3)--(tP1a);
\draw (P3)--(tP2);
\draw (P3A)--(tP1);
\draw (tP1)--(tP2);
\draw (tP2)--(tP3);
\draw (tP1a)--(tP3);

\begin{scope}[transform canvas={xshift = 6cm}]
\node(P1) at (-2,1.5) {$P_1$};
\node(P2) at (-.5,3)  {$P_2$};
\node(P3) at(1,4.5) {$P_3$};
\node(tP1a) at (2.5,6) {$\tau^-P_1$};
\node(P3A) at (-.5,0){$P_3$};
\node(tP1) at (1,1.5){$\tau^-P_1$};
\node(tP2) at (2.5,3)  {$\tau^-P_2$};
\node(tP3) at (4,4.5)  {$\tau^-P_3$};

\draw (P1)--node {$f_{12}\ \ \ \ \ \ $}(P2);
\draw (P1)--node {$f_{13}\ \ \ \ \ \ $}(P3A);
\draw (P2)--node {$\tau^-g^{(1)}_{12}\ \ \ \ \ $}(tP1);
\draw (P2)--node {$f_{23}\ \ \ \ \ \ $}(P3);
\draw (P3)--node {$\tau^-g^{(1)}_{13}\ \ \ \ \ \ $}(tP1a);
\draw (P3)--node {$\tau^-g^{(1)}_{23}\ \ \ \ \ $}(tP2);
\draw (P3A)--node[below] {$\ \ \ \ \ \  \tau^-g^{(1)}_{13}$}(tP1);
\draw (tP1)--node[below] {$\ \ \ \ \ \ \tau^- f_{12}$}(tP2);
\draw (tP2)--node[below] {$\ \ \ \ \ \ \tau^- f_{23}$}(tP3);
\draw (tP1a)--node {$\ \ \ \ \ \ \ \  \ \ \tau^- f_{13}$}(tP3);
\end{scope}
\end{tikzpicture}
$

For the quiver $Q$ in this example, it is easy to see that the irreducible morphisms in the preprojective component of $\bk Q$-mod are monomorphisms. If we choose irreducible morphisms $f_{ij}: P_i\rightarrow P_j=a_{ij}\epsilon_{ij}$, where $a_{ij}\in \bk^\times$ and $\epsilon_{ij}:P_i\rightarrow P_j$ are the canonical embeddings, then one can verify that $\tau^-f_{ij}=a_{ij}\tau^-\epsilon_{ij}$  and $\tau^-\epsilon_{ij}:\tau^-P_i\rightarrow \tau^-P_j$ are the canonical embeddings.


Now suppose we can find $\{g^{(1)}_{ij}\}$ such that the mesh relations $(\ref{mesh relation 2})$ are satisfied. Assume $\tau^-g^{(1)}_{ij}=b_{ij}\iota_{ij}$, where $b_{ij}\in \bk^\times$ and $\iota_{ij}: P_j\rightarrow \tau^- P_i$ is the canonical embedding. Then the mesh relation $(\ref{mesh relation 2})$ is equivalent to the following system of equations:
$$
\begin{cases}
a_{12}b_{12}+a_{13}b_{13}=0\\
a_{23}b_{23}+ a_{12}b_{12}=0\\
a_{13}b_{13}+ a_{23}b_{23}=0,
\end{cases}
$$
which has solutions $a_{12}b_{12}=a_{23}b_{23}=a_{13}b_{13}=0$. This contradicts with the assumptions $a_{ij}\in \bk^\times$ and $b_{ij}\in \bk^\times$.
\end{exm}

\section{Preprojective algebras of tree-type quivers}
\label{sec:preprojective}

In this section, we first recall different descriptions of preprojective algebras of any acyclic quiver $Q$. We then construct algebra isomorphisms to show equivalences between these descriptions when $Q$ is a tree-type quiver. 


\subsection{Definitions of the preprojective algebra}
\label{subsec:preprojective}

The study of preprojective algebras was started by Gelfand and Ponomarev \cite{GP} and developed by Baer-Geigle-Lenzing \cite{BGL} and Ringel \cite{R2}. There are two traditional ways of describing the preprojective algebra (see Definitions $\ref{defn:quiver}$ and $\ref{defn:BGL}$), both of which are proved by Ringel \cite{R2}, Crawley-Boevey \cite{CB} and Baer-Geigle-Lenzing \cite{BGL} to be equivalent to the one given in Definition $\ref{defn:tensor alg}$. We will consider one more description in Definition $\ref{defn:sigma}$. All four descriptions will be proved equivalent for tree-type quivers in the next section. 

\begin{defn}\cite{GP}
\label{defn:quiver}
Let $Q=(Q_0,Q_1)$ be an acyclic quiver. Define the double quiver $\overline{Q}=(\overline{Q}_0, \overline{Q}_1)$ as follows: $\overline{Q}_0=Q_0$ and for each arrow $(\alpha: i\rightarrow j) \in Q_1$, define a reversed arrow $(\alpha^*: j\rightarrow i)$, let $\overline{Q}_1=\{\alpha,\alpha^*\mid \alpha \in Q_1\}$. The {\bf preprojective algebra} of $Q$ is $\Pi:= \bk\overline{Q}/ (\rho)$, defined as a quotient algebra of the path algebra $\bk\overline{Q}$ modulo the relation $\rho=\mathop\sum\limits_{\alpha \in Q_1} (\alpha^*\alpha-\alpha\alpha^*)$.
\end{defn}

There is another definition of preprojective algebra given by Baer-Geigle-Lenzing \cite{BGL}:

\begin{defn}\cite{BGL}
\label{defn:BGL}
The {\bf preprojective algebra} of $Q$ is $\Sigma' :=\mathop\bigoplus\limits_{i\geq0} \Hom_{\bk Q}(\bk Q,\tau^{-i}\bk Q)$, where multiplication is given by $u* v=(\tau^{-t}u)\circ v$, for elements $u\in \Hom_{\bk Q}(\bk Q,\tau^{-s}\bk Q)$ and $v\in \Hom_{\bk Q}(\bk Q,\tau^{-t}\bk Q)$.
\end{defn}

In the same paper, Baer-Geigle-Lenzing \cite[Proposition 3.1]{BGL} proved that preprojective algebra $\Sigma'$ is isomorphic to a certain tensor algebra of some bimodule, as follows:

\begin{defn}
Let $\Lambda$ be a ring and $\Theta$ be a $\Lambda$-bimodule. Let $\Lambda \langle\Theta\rangle$ denote the corresponding tensor algebra which is the direct sum
$$
\Lambda \langle\Theta\rangle=\mathop\bigoplus\limits_{t\geq0}\Theta^{ \otimes t},
$$
where $\Theta^{ \otimes 0}=\Lambda$ by convention. In particular, the multiplication of an element $a\in \Theta^{\otimes s}$ with another element $b\in \Theta^{\otimes t}$ is $a \otimes b \in \Theta^{\otimes (s+t)}=\Theta^{ \otimes s}\,\otimes \,\Theta^{\otimes t}$ if both $s,t \geq 1$, and $a \otimes b=ab$ scalar multiplication otherwise.
\end{defn}


Let $Q$ be an acyclic quiver and $\bk Q$ be its path algebra. Denote $D(\bk Q)=\Hom_\bk(\bk Q,\bk)$ the usual $\bk$-dual. Then $\Omega:= \Ext^1_{\bk Q}(D(\bk Q),\bk Q)$ is a bimodule over $\bk Q$. 
The Auslander-Reiten translations $\tau = DTr$ and $\tau^- = TrD$ on the category $\bk Q$-$\modd$ of all finite dimensional $\bk Q$-modules are well-defined functors since $\bk Q$ is hereditary. 

\begin{defn}
\label{defn:tensor alg}
Let $\Omega:= \Ext^1_{\bk Q}(D(\bk Q),\bk Q)$. The {\bf $\Ext$-tensor algebra} associated to $\bk Q$ is defined to be the tensor algebra $\bk Q\langle\Omega\rangle$. 
\end{defn}

The following lemma is well-known (e.g.~\cite{R2}), we state its proof here for completeness. 

\begin{lem} \label{tau fun}
Let $H$ be any hereditary algebra, then there are equivalences of functors on $H$-$\modd$:
$$
\tau \simeq D\Ext^1_H(-,H) \quad \text{ and } \quad \tau^-\simeq \Ext_H^1(D(H),-).
$$
\end{lem}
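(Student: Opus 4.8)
The plan is to establish the two functorial equivalences by reducing them to the classical descriptions of the Auslander-Reiten translate $\tau = DTr$ and its inverse $\tau^- = TrD$, and then identifying $Tr$ and $D$ in terms of the functors appearing in the statement. First I would recall that for any module $M$ over a hereditary algebra $H$, one takes a minimal projective presentation
$$
\xymatrix{P_1 \ar[r]^{d} & P_0 \ar[r] & M \ar[r] & 0},
$$
and applies the functor $(-)^* = \Hom_H(-,H)$, which lands in the category of right $H$-modules, i.e.\ $H^{op}$-modules. This yields the exact sequence $P_0^* \to P_1^* \to \operatorname{Tr} M \to 0$ defining the transpose. The key computation is that, because $H$ is hereditary (so every submodule of a projective is projective and projective resolutions have length at most one), the cokernel $\operatorname{Tr} M$ computes precisely $\Ext^1_H(M,H)$ after a degree shift; more carefully, one shows that applying $\Hom_H(-,H)$ to the presentation identifies $\operatorname{Tr} M$ with $\Ext^1_{H}(M,H)$ as $H^{op}$-modules.

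Next I would assemble the pieces for the first equivalence $\tau \simeq D\Ext^1_H(-,H)$. Since $\tau = D\operatorname{Tr}$ by definition and $\operatorname{Tr}(-) \simeq \Ext^1_H(-,H)$ by the preceding step, composing with the duality $D = \Hom_\bk(-,\bk)$ gives $\tau \simeq D\Ext^1_H(-,H)$ directly. The only care needed is to verify that these identifications are natural in $M$, so that they indeed constitute an equivalence of functors rather than merely a pointwise isomorphism; naturality follows from the functoriality of taking projective presentations (minimal presentations are functorial up to the relevant homotopy, and the hereditary hypothesis makes the ambiguity disappear on the relevant quotients).

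For the second equivalence $\tau^- \simeq \Ext^1_H(D(H),-)$, the cleanest route is to dualize the first. I would use that $\tau^-$ is right adjoint (quasi-inverse on the stable category) to $\tau$, together with the standard identity $\tau^- = \operatorname{Tr} D$. Applying the duality $D$ turns an $H$-module into an $H^{op}$-module, computing $\operatorname{Tr}$ on it, and dualizing back; tracing through, one rewrites $\operatorname{Tr} D(-)$ using the transpose-as-$\Ext$ identification on the $H^{op}$ side, which produces $\Ext^1_{H^{op}}(D(-),H)$, and then a further dualization reorganizes this into $\Ext^1_H(D(H),-)$ by the tensor-hom and duality adjunctions over the hereditary algebra. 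An alternative I would keep in reserve is to invoke the Auslander-Reiten formula $\Hom_H(-,\tau(-)) \simeq D\Ext^1_H((-),(-))^{\mathrm{op}}$ style identities, but the transpose computation is more elementary and self-contained.

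The main obstacle I anticipate is bookkeeping the side on which modules live: the transpose $\operatorname{Tr}$ swaps $H$-modules and $H^{op}$-modules, and the statement mixes $\Ext^1_H(-,H)$ (an $H^{op}$-valued or $H$-valued bifunctor depending on the slot) with $\Ext^1_H(D(H),-)$. Keeping the left/right module structures straight through the chain $\operatorname{Tr}$, $D$, and $\Ext^1_H(D(H),-)$ is where errors creep in, and it is also where the hereditary hypothesis is genuinely used (to guarantee $\Ext^{\geq 2}$ vanishes so that the presentation-length-one arguments go through and the $\Ext^1$ identifications are clean). I would therefore organize the proof so that every functor is annotated with its source and target module category, and verify the $H$-bimodule structure on $\Omega = \Ext^1_{\bk Q}(D(\bk Q),\bk Q)$ is compatible at the end.
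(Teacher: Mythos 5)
Your proposal is correct and takes essentially the same route as the paper: both use that $\gldim H = 1$ to get a length-one projective resolution $0 \to P_1 \to P_0 \to M \to 0$, identify $Tr M$ with the cokernel $\Ext^1_H(M,H)$ of $\Hom_H(P_0,H) \to \Hom_H(P_1,H)$, and compose with $D$ to obtain $\tau = DTr \simeq D\Ext^1_H(-,H)$. For the second equivalence the paper only writes ``Similarly,'' and your dualization argument --- applying the transpose-as-$\Ext^1$ identification over $H^{op}$ to $\tau^- = TrD$ and then using that the duality $D$ induces isomorphisms $\Ext^1_{H^{op}}(DN, H) \simeq \Ext^1_H(D(H), N)$ --- is exactly the intended way to fill that in.
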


\begin{proof} 
Since $\gldim H=1$, for any module $M$, we have a minimal projective resolution:
$$
\xymatrix{0\ar[r]&P_1\ar[r]&P_0\ar[r]&M\ar[r]&0.}
$$

By the definition of transpose $Tr$, we have the following exact sequences and commutative diagram:
$$
\xymatrix{0\ar[r]& \Hom_H(M,H) \ar@{=}[d]\ar[r]& \Hom_H(P_0,H)\ar[r]\ar@{=}[d] &\Hom_H(P_1,H)\ar[r] \ar@{=}[d]&Tr M\ar[r]\ar[d]^{\simeq}&0\\
0\ar[r]& \Hom_H(M,H) \ar[r]& \Hom_H(P_0,H)\ar[r] &\Hom_H(P_1,H)\ar[r] &\Ext_H^1(M,H)\ar[r]&0.
}
$$
Hence, the Auslander-Reiten translation on $M$ is $\tau M=DTr\, M\simeq D\Ext_H^1(M,H)$. It is easy to check that this isomorphism is functorial and we have $\tau\simeq D\Ext_H^1(-,H)$. Similarly, $\tau^-\simeq \Ext_H^1(D(H),-)$.
\end{proof}

\begin{lem}\cite[\S 3]{BGL} 
\label{bgl s3}
Suppose $H$ is a hereditary algebra and $X$ is an $H$-bimodule. Then there is an $H$-bimodule isomorphism:
\begin{eqnarray*}
\Hom_H(H,X)\otimes_H\Hom_H(H,\tau^{-s}H)&\rightarrow& \Hom_H(H,\tau^{-s}X), \text{ by sending }\\
u\otimes v&\mapsto& (\tau^{-s}u)\circ v.
\end{eqnarray*}

\end{lem}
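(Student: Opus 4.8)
The plan is to exploit that $H$ is projective and that, over a hereditary algebra, $\tau^{-s}$ behaves like a tensor functor. The first step is the natural isomorphism $\Hom_H(H,N)\xrightarrow{\sim}N$, $f\mapsto f(1)$, valid for every left $H$-module $N$ and an isomorphism of $H$-bimodules when $N$ is a bimodule. Applying it to $N=X$, $N=\tau^{-s}H$ and $N=\tau^{-s}X$ identifies the three terms with $X$, $\tau^{-s}H$, $\tau^{-s}X$ and rewrites the prospective isomorphism as a natural map $X\otimes_H\tau^{-s}H\to\tau^{-s}X$ sending the class of $u\otimes v$ to $(\tau^{-s}u)(v(1))$. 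So the task splits into checking that $u\otimes v\mapsto(\tau^{-s}u)\circ v$ is a well-defined bimodule homomorphism and then that it is bijective.

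For well-definedness I would first fix the bimodule structures on each $\Hom$-space — the left action induced by the right action of $H$ on the source copy of $H$, the right action induced by the right action of $H$ on the target module — and then verify that the assignment is balanced over $H$ and is $H$-linear on both sides. Each of these is a formal consequence of the functoriality of $\tau^{-s}$: it converts $\tau^{-s}(g\circ f)$ into $\tau^{-s}(g)\circ\tau^{-s}(f)$, and it carries the right-multiplication maps on $X$ and on $H$ to the right actions on $\tau^{-s}X$ and on $\tau^{-s}H$. I expect this to be the main obstacle, since the whole content lies in keeping the left and right $H$-actions on the three $\Hom$-spaces straight and matching them correctly across the composition $(\tau^{-s}u)\circ v$; it is easy to conflate the action coming from the source copy of $H$ with the one coming from the target.

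For bijectivity I would argue by a d\'evissage in the variable $X$. Both $X\mapsto\Hom_H(H,X)\otimes_H\Hom_H(H,\tau^{-s}H)$ and $X\mapsto\Hom_H(H,\tau^{-s}X)$ are right exact additive functors: the first because $\Hom_H(H,-)$ is exact ($H$ being projective) and $(-)\otimes_H\Hom_H(H,\tau^{-s}H)$ is right exact, the second because $\tau^{-s}$ is right exact — indeed $\tau^{-}\cong\Ext^1_H(D(H),-)$ by Lemma~\ref{tau fun}, and $\Ext^2_H=0$ since $H$ is hereditary. The constructed natural transformation is an isomorphism when $X=H$, where under the identifications it becomes the identity of $\tau^{-s}H$; hence it is an isomorphism for every finitely generated free $X$. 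Choosing a free presentation $H^{m}\to H^{n}\to X\to 0$ and applying the five lemma then yields the isomorphism for arbitrary $X$. Conceptually this is just the statement, via Eilenberg--Watts, that the right exact coproduct-preserving functor $\tau^{-s}$ is tensoring with the bimodule $\tau^{-s}H$, of which the lemma is the explicit incarnation.
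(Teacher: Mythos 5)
The paper itself offers no proof of this lemma~--- it is quoted from \cite[\S 3]{BGL} and only used as a black box in Lemma~\ref{lem:tensor isom}~--- so your proposal has to stand on its own, and it has a genuine gap at exactly the step you call ``the main obstacle'' and then dismiss as formal. Fix the conventions you yourself chose: left action on $\Hom_H(H,N)$ induced from the right action on the source, right action induced from the right action on the target, so that $\Hom_H(H,N)\cong N$ as bimodules. The tensor product $\Hom_H(H,X)\otimes_H\Hom_H(H,\tau^{-s}H)$ then contracts the \emph{target}-induced action of the first factor against the \emph{source}-induced action of the second, and with this pairing the assignment $u\otimes v\mapsto(\tau^{-s}u)\circ v$ is \emph{not} balanced: $(u\cdot a)\otimes v$ goes to $\tau^{-s}(\rho^X_a)\circ\tau^{-s}(u)\circ v$ (the functorial right action of $a$, applied on the outside), whereas $u\otimes(a\cdot v)$ goes to $\tau^{-s}(u)\circ v\circ\rho^H_a$ (the internal left action of $a$, applied on the inside); functoriality only identifies $\tau^{-s}(u)\circ\tau^{-s}(\rho_a)$ with $\tau^{-s}(u\circ\rho_a)$, it cannot move $\rho_a$ from the leftmost to the rightmost slot. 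Already for $s=0$ your map is $X\otimes_H H\to X$, $x\otimes h\mapsto hx$, which is not balanced ($(hx)a$ versus $a(hx)$), and your base-case claim that for $X=H$ the map ``becomes the identity of $\tau^{-s}H$'' fails for the same reason: it is $h\otimes m\mapsto m\cdot h$ (functorial action), not $h\otimes m\mapsto hm$. The statement is correct only when the \emph{source}-induced action on $\Hom_H(H,X)$ is contracted against the \emph{functorial} action on $\Hom_H(H,\tau^{-s}H)$, i.e.\ when one reads the tensor product in the order $\Hom_H(H,\tau^{-s}H)\otimes_H\Hom_H(H,X)$ with the same formula $v\otimes u\mapsto(\tau^{-s}u)\circ v$; then balancedness is precisely the formal identity $\tau^{-s}(u)\circ\tau^{-s}(\rho^H_a)\circ v=\tau^{-s}(u\circ\rho^H_a)\circ v$, the image of $(v\cdot a)\otimes u$ and of $v\otimes(a\cdot u)$ alike.

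The same flip is what rescues your d\'evissage, which as written is ill-posed: $X\otimes_H\tau^{-s}H$ is a functor of the \emph{right} $H$-module structure of $X$, while $\tau^{-s}X$ is a functor of the \emph{left} structure, so a presentation $H^m\to H^n\to X\to 0$~--- free in which category?~--- induces maps on one side but not on the other; and if you instead resolve $X$ by free bimodules, those are copies of $H\otimes_\bk H$, about which your base case $X=H$ says nothing. After the flip, both $X\mapsto\tau^{-s}H\otimes_H X$ and $X\mapsto\tau^{-s}X$ are right exact endofunctors of $H$-$\modd$ in the left-module variable $X$ (your right-exactness arguments, via $\tau^-\simeq\Ext^1_H(D(H),-)$ and heredity, are fine), the transformation is the canonical isomorphism at $X=H$, the five-lemma argument over left-module presentations goes through, and bimodule-linearity of the resulting isomorphism follows from naturality applied to the left-module endomorphisms $\rho^X_a$. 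This is the Eilenberg--Watts argument you gesture at in your final sentence; note that for \emph{left} modules Eilenberg--Watts places the bimodule on the left, $\tau^{-s}\simeq\tau^{-s}H\otimes_H(-)$, which is exactly the order your write-up gets backwards.
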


\begin{lem}\cite[\S 3]{BGL}
\label{lem:tensor isom}
Let $\Theta=\Ext_H^1(D(H),H)$. There is an $H$-bimodule isomorphism:
$$
\Theta^{\otimes t}\simeq \Hom_H(H,\tau^{-t}H).
$$
\end{lem}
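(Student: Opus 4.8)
The plan is to establish the isomorphism $\Theta^{\otimes t}\simeq \Hom_H(H,\tau^{-t}H)$ by induction on $t$, using Lemma~\ref{bgl s3} as the inductive engine. The base cases are essentially definitional: for $t=0$ we have $\Theta^{\otimes 0}=H=\Hom_H(H,H)=\Hom_H(H,\tau^0 H)$, and for $t=1$ the claim reads $\Theta=\Ext^1_H(D(H),H)\simeq\Hom_H(H,\tau^{-1}H)$. This last identification is exactly where Lemma~\ref{tau fun} enters: since $\tau^{-}\simeq\Ext^1_H(D(H),-)$, we have $\tau^{-1}H\simeq\Ext^1_H(D(H),H)=\Theta$, whence $\Hom_H(H,\tau^{-1}H)\simeq\Hom_H(H,\Theta)\simeq\Theta$ because $H$ is a free left module of rank one (so $\Hom_H(H,-)$ is naturally the identity functor on left modules, and one checks this respects the bimodule structure).

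Next I would run the inductive step. Assuming $\Theta^{\otimes t}\simeq\Hom_H(H,\tau^{-t}H)$ as $H$-bimodules, I want to produce $\Theta^{\otimes(t+1)}\simeq\Hom_H(H,\tau^{-(t+1)}H)$. The natural move is to write $\Theta^{\otimes(t+1)}=\Theta\otimes_H\Theta^{\otimes t}\simeq\Theta\otimes_H\Hom_H(H,\tau^{-t}H)$ using the inductive hypothesis. Now I apply Lemma~\ref{bgl s3} with the bimodule $X=\Theta$ and with $s=t$: that lemma gives
$$
\Hom_H(H,X)\otimes_H\Hom_H(H,\tau^{-t}H)\;\simeq\;\Hom_H(H,\tau^{-t}X).
$$
Taking $X=\Theta$ and using $\Hom_H(H,\Theta)\simeq\Theta$ (again since $H$ is free of rank one on the left) rewrites the left-hand side as $\Theta\otimes_H\Hom_H(H,\tau^{-t}H)$. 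The right-hand side becomes $\Hom_H(H,\tau^{-t}\Theta)$, and since $\Theta\simeq\tau^{-1}H$ from Lemma~\ref{tau fun}, we get $\tau^{-t}\Theta\simeq\tau^{-t}\tau^{-1}H=\tau^{-(t+1)}H$. Stringing these together yields $\Theta^{\otimes(t+1)}\simeq\Hom_H(H,\tau^{-(t+1)}H)$, closing the induction.

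The step I expect to require the most care is verifying that all these identifications are genuine $H$-\emph{bimodule} isomorphisms and that they are compatible, so that the induction actually threads through without the two-sided module structures getting scrambled. In particular, the identification $\Hom_H(H,M)\simeq M$ is obvious as left modules, but one must confirm the right $H$-action is transported correctly, and similarly the isomorphism $\Theta\simeq\tau^{-1}H$ coming from Lemma~\ref{tau fun} must be checked to be bilinear rather than merely left-linear. The map in Lemma~\ref{bgl s3}, sending $u\otimes v\mapsto(\tau^{-s}u)\circ v$, is already asserted there to be a bimodule isomorphism, so the remaining obstacle is purely bookkeeping: ensuring the chain of natural isomorphisms respects both actions at each stage. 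Since the paper has already packaged the delicate functoriality into Lemmas~\ref{tau fun} and~\ref{bgl s3}, I expect the proof to reduce to a clean two-line induction once these compatibilities are noted, and I would state the bimodule-compatibility remarks explicitly rather than grinding through the elementwise verification.
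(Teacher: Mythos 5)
Your proof is correct and follows exactly the route the paper intends: its own proof of Lemma~\ref{lem:tensor isom} is the one-line remark that the isomorphism follows from Lemmas~\ref{tau fun}, \ref{bgl s3} and induction, which is precisely the induction you carried out (base case via $\tau^-\simeq\Ext^1_H(D(H),-)$, inductive step via Lemma~\ref{bgl s3} with $X=\Theta$, $s=t$). Your explicit attention to the bimodule structures is a welcome elaboration of details the paper leaves implicit, but it is the same argument.
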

\begin{proof}
One can easily construct an isomorphism by Lemmas $\ref{tau fun}$, $\ref{bgl s3}$ and induction.
\end{proof}

Hence, from these lemmas, one can construct the following algebra isomorphism given by Baer-Geigle-Lenzing \cite[Proposition 3.1]{BGL} and obtain an equivalent description of the preprojective algebra of $Q$ using the $\Ext$-tensor algebra $\bk Q\langle\Omega\rangle$:

\begin{prop}\cite[Proposition 3.1]{BGL} 
\label{bgl 3.1}
Let $Q$ be an acyclic quiver and $\bk Q\langle\Omega\rangle$ be the $\Ext$-tensor algebra given in Definition $\ref{defn:tensor alg}$. Then there is an algebra isomorphism:
$$\bk Q\langle\Omega\rangle\simeq \Sigma'.$$
 \end{prop}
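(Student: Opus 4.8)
The plan is to establish the algebra isomorphism $\bk Q\langle\Omega\rangle\simeq\Sigma'$ by assembling the graded pieces. Recall that by definition $\bk Q\langle\Omega\rangle=\bigoplus_{t\geq0}\Omega^{\otimes t}$ with $\Omega=\Ext^1_{\bk Q}(D(\bk Q),\bk Q)$, and $\Sigma'=\bigoplus_{i\geq0}\Hom_{\bk Q}(\bk Q,\tau^{-i}\bk Q)$. The key observation is that Lemma~\ref{lem:tensor isom} already supplies, for each $t\geq0$, a $\bk Q$-bimodule isomorphism $\Omega^{\otimes t}\simeq\Hom_{\bk Q}(\bk Q,\tau^{-t}\bk Q)$. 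So I would first define the candidate map $\Phi:\bk Q\langle\Omega\rangle\to\Sigma'$ to be the direct sum of these degreewise isomorphisms, i.e. $\Phi=\bigoplus_t\Phi_t$ where $\Phi_t:\Omega^{\otimes t}\xrightarrow{\sim}\Hom_{\bk Q}(\bk Q,\tau^{-t}\bk Q)$ is the isomorphism from Lemma~\ref{lem:tensor isom}. Since each $\Phi_t$ is a bijection of vector spaces (indeed of bimodules), $\Phi$ is automatically a linear isomorphism; the content of the proposition is that it respects multiplication.

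Next I would check compatibility with the two products. On the $\Sigma'$ side the product of $u\in\Hom_{\bk Q}(\bk Q,\tau^{-s}\bk Q)$ and $v\in\Hom_{\bk Q}(\bk Q,\tau^{-t}\bk Q)$ is $u*v=(\tau^{-t}u)\circ v$ as in Definition~\ref{defn:BGL}, while on $\bk Q\langle\Omega\rangle$ the product of $a\in\Omega^{\otimes s}$ and $b\in\Omega^{\otimes t}$ is simply the tensor $a\otimes b\in\Omega^{\otimes(s+t)}$. So the multiplicativity statement is the commutativity of the square expressing $\Phi_{s+t}(a\otimes b)=\Phi_s(a)*\Phi_t(b)$. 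The natural way to see this is to unwind the construction of $\Phi_t$: it is built by iterating the isomorphism of Lemma~\ref{bgl s3}, namely $\Hom_{\bk Q}(\bk Q,X)\otimes_{\bk Q}\Hom_{\bk Q}(\bk Q,\tau^{-s}\bk Q)\to\Hom_{\bk Q}(\bk Q,\tau^{-s}X)$ sending $u\otimes v\mapsto(\tau^{-s}u)\circ v$. Thus the tensor-algebra product, transported through $\Phi$, is precisely the $(\tau^{-t}u)\circ v$ operation, which is exactly the $\Sigma'$-product. Concretely, I would verify this first on single tensor factors (the degree-one generators of $\Omega$) and then extend by associativity of both products.

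Carrying this out, I would set up an induction on $t$ for the construction of $\Phi_t$ as in the proof of Lemma~\ref{lem:tensor isom}, and then the multiplicativity of $\Phi$ reduces to the naturality and associativity built into the Lemma~\ref{bgl s3} maps. The inductive step essentially says that the isomorphism $\Omega^{\otimes(s+t)}=\Omega^{\otimes s}\otimes\Omega^{\otimes t}\to\Hom_{\bk Q}(\bk Q,\tau^{-(s+t)}\bk Q)$ factors as first applying $\Phi_s\otimes\Phi_t$ and then the Lemma~\ref{bgl s3} map with $X=\tau^{-t}\bk Q$ (together with the identification $\tau^{-s}\tau^{-t}\bk Q\simeq\tau^{-(s+t)}\bk Q$, valid on the preprojective component). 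Unit compatibility is immediate since $\Omega^{\otimes0}=\bk Q=\Hom_{\bk Q}(\bk Q,\bk Q)$ and $\Phi_0=\mathrm{id}$.

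The main obstacle, and the only place requiring genuine care, is the bookkeeping that ensures the map $u\otimes v\mapsto(\tau^{-t}u)\circ v$ of Lemma~\ref{bgl s3} is strictly associative when iterated, so that the two ways of bracketing a product of three or more homogeneous elements of $\Sigma'$ agree under the identifications $\tau^{-s}\tau^{-t}\simeq\tau^{-(s+t)}$. This is exactly the coherence needed to conclude that $\Phi$ is an algebra (not merely graded-vector-space) isomorphism, and it is where one must confirm that the functorial isomorphisms from Lemma~\ref{tau fun} and Lemma~\ref{bgl s3} are chosen compatibly. Once associativity of the transported product is confirmed, the proposition follows: $\Phi$ is a degreewise-bijective, multiplicative, unital map, hence an isomorphism of algebras $\bk Q\langle\Omega\rangle\simeq\Sigma'$.
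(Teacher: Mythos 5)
Your proposal is correct and takes essentially the same approach as the paper: the paper also obtains Proposition~\ref{bgl 3.1} by assembling the degreewise bimodule isomorphisms $\Omega^{\otimes t}\simeq \Hom_{\bk Q}(\bk Q,\tau^{-t}\bk Q)$ of Lemma~\ref{lem:tensor isom} (built inductively from Lemmas~\ref{tau fun} and~\ref{bgl s3}) and noting that the iterated map $u\otimes v\mapsto(\tau^{-t}u)\circ v$ carries the tensor-algebra product to the $\Sigma'$-product $u*v$. You merely spell out the multiplicativity, unit, and coherence checks that the paper leaves implicit by citing \cite{BGL}.
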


As a consequence, we observe that $\bk Q\langle\Omega\rangle$ as an $\bk Q$-module is the direct sum of all the indecomposable preprojective $\bk Q$-modules, each occurring with multiplicity one:
$$\bk Q\langle\Omega\rangle \simeq  \Sigma'=\mathop\bigoplus\limits_{i\geq0} \Hom_{\bk Q}(\bk Q,\tau^{-i}\bk Q) \simeq \mathop\bigoplus\limits_{i\geq0}  \tau^{-i}\bk Q.$$
Hence, we can also call $\bk Q\langle\Omega\rangle$ the preprojective algebra. 


%

It is known that $\Pi\simeq \bk Q\langle\Omega\rangle$ as algebras, which was proved independently by Ringel \cite[Theorem A]{R2} and Crawley-Boevey \cite[Theorems 2.3 and 3.1]{CB}. Thus, $\Pi \simeq \Sigma'$ as algebras. This is used to describe finitely presented covariant functors over $\mathcal T$ as modules over preprojective algebra $\Pi$, \cite[Proposition 3.6]{BGL}. In order to study finitely presented contravariant functors as modules over $\Pi$, we need to start with a similarly defined algebra:

\begin{defn}
\label{defn:sigma}
Suppose $H$ is a hereditary algebra. Let $\mathcal T$ be the transjective component of the bounded derived category $D^b(H)$. Define algebra $\Sigma:= \bigoplus_{i\geq 0} \Hom_{\mathcal T}(\tau^i H, H)$. For elements $u\in \Hom_{\mathcal T}(\tau^sH,H)$, $v\in \Hom_{\mathcal T}(\tau^tH,H)$, define multiplication as $u \times v=v \circ \tau^t u$.
\end{defn}

We will construct an explicit algebra isomorphism between the preprojective algebra $\Pi=\bk\overline{Q}/(\sum_{\alpha \in Q_1} (\alpha^* \alpha-\alpha\alpha^*))$ and $\Sigma= \bigoplus_{i\geq 0} \Hom_{\mathcal T}(\tau^i \bk Q, \bk Q)$ in the next section.


\subsection{The algebra isomorphisms}
\label{subsec:alg isom}

Let $Q$ be a tree-type quiver, $\bk Q$ be its path algebra, $\mathcal T$ be the transjective component of the bounded derived category $D^b(\bk Q)$, and $\Pi$, $\Sigma$, $\Sigma'$ be algebras defined as in Section~\ref{subsec:preprojective}. Recall that by the filling technique from Section~\ref{subsec:filling}, we can choose irreducible morphisms $f_{ij}: P_i \rightarrow P_j$ and $g^{(-1)}_{ij}: \tau P_j \rightarrow P_i$ in $\mathcal T$ such that they satisfy the commutativity relations $(\ref{comm2})$ (Corollary \ref{sign diff 2}). Moreover, each irreducible morphisms in $\mathcal T$ is a scalar multiple of an element in $\mathcal H=\{\tau^n f_{ij}, \tau^n g^{(-1)}_{ij}\ |\ n\in\mathbb Z \}$.


\begin{thm}
\label{thm:alg isom}
Suppose $Q$ is a tree-type quiver.  Then there is an algebra isomorphism  
\begin{eqnarray*}
\eta: &\Sigma= \bigoplus_{i\geq 0} \Hom_{\mathcal T}(\tau^i \bk Q, \bk Q) &\longrightarrow \Pi=\bk\overline{Q}/(\sum_{\alpha \in Q_1} (\alpha^* \alpha-\alpha\alpha^*)), \text{ sending} \\
&(f_{ij}: P_i \rightarrow P_j) & \mapsto \text{ arrow } (\alpha: j\rightarrow i), \text{and } \\  
&(g^{(-1)}_{ij}: \tau P_j \rightarrow P_i) & \mapsto \text{ arrow } (\alpha^*: i \rightarrow j),
\end{eqnarray*}
where $\{f_{ij}, \,g^{(-1)}_{ij}\}$ are irreducible morphisms chosen in Section $\ref{subsec:filling}$ such that elements in $\mathcal H=\{\tau^n f_{ij}, \tau^n g^{(-1)}_{ij}\ |\ n\in\mathbb Z \}$ satisfy the commutativity relations.\end{thm}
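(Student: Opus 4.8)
The plan is to construct the inverse map explicitly and verify that $\eta$ is a well-defined algebra homomorphism, then conclude bijectivity. First I would establish that $\eta$ is \emph{well-defined}. The source $\Sigma = \bigoplus_{i\geq 0}\Hom_{\mathcal T}(\tau^i\bk Q, \bk Q)$ decomposes via the idempotents $e_a$ into pieces $\Hom_{\mathcal T}(\tau^i P_a, P_b)$, and by Theorem~\ref{LP2.3} together with Lemma~\ref{well def}, every such morphism is a $\bk$-linear combination of compositions of the irreducible morphisms in $\mathcal H = \{\tau^n f_{ij}, \tau^n g^{(-1)}_{ij}\}$. So I would define $\eta$ on each basis morphism as the corresponding composition of arrows $\alpha$ and $\alpha^*$ in $\bk\overline{Q}$, and extend $\bk$-linearly. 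The crucial point is that $\eta$ must send relations to relations: a composition of irreducible morphisms vanishes in $\mathcal T$ if and only if, by Lemma~\ref{well def}, it is generated by the commutativity relations $(\ref{comm2})$. Under $\eta$, the commutativity relation $\sum_{(k\rightarrow j)}\tau^n g^{(-1)}_{jk}\tau^{n+1}f_{jk} - \sum_{(j\rightarrow i)}\tau^n f_{ij}\tau^n g^{(-1)}_{ij}$ maps precisely to $\sum_{(k\to j)}\alpha_{jk}^*\alpha_{jk} - \sum_{(j\to i)}\alpha_{ij}\alpha_{ij}^*$, which is exactly the defining relation $\rho = \sum_\alpha(\alpha^*\alpha - \alpha\alpha^*)$ localized at vertex $j$. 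This is where the sign choice of $\lambda = -1$ (the commutativity relations rather than the mesh relations) pays off: it is what makes the image land in the ideal $(\rho)$ rather than a shifted variant.

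Second I would check that $\eta$ is \emph{multiplicative}. The multiplication in $\Sigma$ is $u \times v = v \circ \tau^t u$ for $u \in \Hom(\tau^s\bk Q, \bk Q)$, $v\in \Hom(\tau^t\bk Q, \bk Q)$, which on the level of irreducible morphisms is exactly concatenation of paths after applying the appropriate power of $\tau$. Since $\eta$ sends $\tau^n f_{ij}$ and $\tau^n g^{(-1)}_{ij}$ (for every $n$) to the \emph{same} arrows $\alpha$, $\alpha^*$ respectively (the $\tau$-power is forgotten), I would verify that composition in $\Sigma$ corresponds under $\eta$ to concatenation of paths in $\bk\overline{Q}/(\rho)$. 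The key compatibility is that the grading on $\Sigma$ by $i$ (the power of $\tau$) matches the path-length grading on $\Pi$, and that applying $\tau^t$ to $u$ before composing is invisible to $\eta$ because $\eta$ treats all $\tau$-translates of a given irreducible morphism identically; this is legitimate precisely because $\tau$ is an equivalence on $\mathcal T$ (Theorem~\ref{main thm}) preserving the $\lambda$-relations uniformly in $n$.

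Third, for \emph{bijectivity}, I would define the inverse on generators by sending arrow $\alpha: j\to i$ to $f_{ij}$ and $\alpha^*: i\to j$ to $g^{(-1)}_{ij}$, extend to paths by the composition rule above, and check this descends to $\Pi$ by the same relation-matching argument run in reverse. Surjectivity is then clear since the arrows $\alpha, \alpha^*$ generate $\Pi$ and are hit; injectivity follows because Lemma~\ref{well def} gives a bijection between the kernel of the ``composition'' presentation of $\Sigma$ and the commutativity relations, which is exactly the kernel $(\rho)$ defining $\Pi$. I expect the main obstacle to be the careful bookkeeping in the multiplicativity step: I must track how the index $i$ in $\tau^i\bk Q$ interacts with the $\tau$-powers appearing in the expression of a morphism as a sum of compositions from $\mathcal H$, and confirm that the ``forget $\tau^n$'' rule is consistent across all such expressions. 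The subtlety is that a single element of $\Pi$ can be represented by many paths in $\bk\overline{Q}$, so I must ensure the well-definedness argument via Lemma~\ref{well def} is invoked on both sides symmetrically, rather than merely checking generators.
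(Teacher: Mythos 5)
Your proposal is correct and follows essentially the same route as the paper's proof: define $\eta$ on the irreducible generators $f_{ij}\mapsto\alpha$, $g^{(-1)}_{ij}\mapsto\alpha^*$ (forgetting $\tau$-powers), invoke Lemma~\ref{well def} to get well-definedness in both directions since the commutativity relations correspond exactly to the defining relation $\rho$ of $\Pi$, and construct the inverse explicitly on arrows and paths. Your extra attention to multiplicativity and to invoking Lemma~\ref{well def} symmetrically on both sides is a slightly more explicit rendering of what the paper builds into its definition of $\eta$ via the $\times$-product, but it is not a different argument.
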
  

\begin{proof}   

(1) {\bf Homomorphism $\eta$}: Define $\eta(f_{ij})= (\alpha: j\rightarrow i)$ and $\eta(g^{(-1)}_{ij})= (\alpha^*: i \rightarrow j)$. Then for each morphism $f\in \Sigma$, we can define $\eta(f)$ in the following way: $f$ can be written as a linear sum of compositions of irreducible morphisms in $\mathcal H$, i.e. $f=\mathop{\sum}\limits_{t}a_t h^{(t)}_{m_t}\circ\cdots\circ h^{(t)}_{1} $, where $a_t\in \bk$ and $h^{(t)}_{s}$ are irreducible morphisms in $\mathcal H$. Now assume the codomain of $h^{(t)}_s$ is $\tau^{k^{(t)}_s}P$ for some indecomposable projective $P$. Then, the composition can be written via the multiplication of $\Sigma$ given in Definition~\ref{defn:sigma} as
$$
f=\mathop{\sum}\limits_{t}a_t (\tau^{-k^{(t)}_1}h^{(t)}_{1}) \times \cdots \times (\tau^{-k^{(t)}_{m_t}}h^{(t)}_{m_t}),
$$
where the last one $k^{(t)}_{m_t}=0$ always. Thus, define $\eta(f)= \mathop{\sum}\limits_{t}a_t \eta(\tau^{-k^{(t)}_1}h^{(t)}_{1})\cdots \eta(\tau^{-k^{(t)}_{m_t}}h^{(t)}_{m_t})$ to be a linear sum of paths.

(2) {\bf $\eta$ is well-defined}: By Lemma~\ref{well def}, if morphism $f=\mathop{\sum}\limits_{t}a_t h^{(t)}_{m_t}\circ\cdots\circ h^{(t)}_{1} =0$, then $f$ is generated by the commutativity relations. So $\eta(f)= \mathop{\sum}\limits_{t}a_t \eta(\tau^{-k^{(t)}_1}h^{(t)}_{1})\cdots \eta(\tau^{-k^{(t)}_{m_t}}h^{(t)}_{m_t})$ is an element in $\Pi$ and generated by the commutativity relations. Consequently, $\eta(f)=0$. 

(3) {\bf Inverse of $\eta$}: To show the injectivity and surjectivity of $\eta$, we directly construct the inverse $\eta^{-1}: \Pi \rightarrow \Sigma$ of $\eta$ as follows. 

For an arrow $(\alpha:j\rightarrow i) \in \overline{Q}_1$ define $\eta^{-1}(\alpha)=f_{ij}$. For an arrow $(\alpha^*:i\rightarrow j) \in \overline{Q}_1$ define $\eta^{-1}(\alpha^*)=g_{ij}^{(-1)}$. For a path $\alpha_1\cdots\alpha_l$, where $\alpha_i\in \overline{Q}_1$, define $\eta^{-1}(\alpha_1\cdots\alpha_l)=\eta^{-1}(\alpha_1)\times\cdots\times\eta^{-1}(\alpha_l)$. Similarly, $\eta^{-1}$ is well defined by the ``if part'' of Lemma~\ref{well def}. It is easy to check that $\eta^{-1}$ is an inverse of $\eta$.
\end{proof}

We can use similar argument for the algebra $\Sigma'$ and get the following dual statement:

\begin{thm}
\label{thm:alg isom2}
Suppose $Q$ is a tree-type quiver. Then there is an algebra isomorphism  
\begin{eqnarray*}
\eta': &\Sigma'=\bigoplus_{i\geq 0} \Hom_{\mathcal T}(\bk Q, \tau^{-i} \bk Q) &\longrightarrow \Pi=\bk\overline{Q}/(\sum_{\alpha \in Q_1} (\alpha^* \alpha-\alpha\alpha^*)), \text{ sending} \\
&(\tau^-g^{(-1)}_{ij}: P_j \rightarrow \tau^-P_i) & \mapsto \text{ arrow } (\alpha: j\rightarrow i), \text{and } \\  
&(f_{ij}: \tau P_i \rightarrow P_j) & \mapsto \text{ arrow } (\alpha^*: i \rightarrow j).
\end{eqnarray*}
\end{thm}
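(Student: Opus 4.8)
The plan is to mirror the three-step proof of Theorem~\ref{thm:alg isom} almost verbatim, transporting the whole argument through the equivalence $\tau^-$ on $\mathcal T$. First I would locate the generators of $\Sigma'$ among the irreducible morphisms in $\mathcal H$: the degree-zero summand $\Hom_{\mathcal T}(\bk Q,\bk Q)$ contains the maps $f_{ij}\colon P_i\to P_j$, while the degree-one summand $\Hom_{\mathcal T}(\bk Q,\tau^{-1}\bk Q)$ is spanned by the irreducible maps $P_j\to\tau^-P_i$, which by Lemma~\ref{irr} exist exactly when there is an arrow $j\to i$, and which are precisely $\tau^-g^{(-1)}_{ij}$ (obtained by applying $\tau^-$ to $g^{(-1)}_{ij}\colon\tau P_j\to P_i$). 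Both families lie in $\mathcal H=\{\tau^n f_{ij},\tau^n g^{(-1)}_{ij}\}$, so since $\mathcal T$ is standard (Theorem~\ref{LP2.3}) and $\mathcal H$ exhausts the irreducible morphisms up to scalars, every element of $\Sigma'$ is a $\bk$-linear combination of $*$-products of these two families.

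With this in hand, I would define $\eta'$ on generators by $\tau^-g^{(-1)}_{ij}\mapsto(\alpha\colon j\to i)$ and $f_{ij}\mapsto(\alpha^*\colon i\to j)$ and extend multiplicatively, exactly as in part~(1) of Theorem~\ref{thm:alg isom}. The only bookkeeping is to rewrite each composition of irreducibles as a product in the twisted multiplication $u* v=(\tau^{-t}u)\circ v$ of Definition~\ref{defn:BGL}, tracking the $\tau$-shift of each factor; this turns a composition into a concatenated path in $\Pi$.

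For well-definedness I would invoke Lemma~\ref{well def}: a sum of compositions vanishes in $\mathcal T$ iff it lies in the ideal generated by the commutativity relations~(\ref{comm2}), so it suffices to check that $\eta'$ carries these onto the defining relation $\rho=\sum_\alpha(\alpha^*\alpha-\alpha\alpha^*)$. Concretely, applying $\tau^-$ to relation~(\ref{comm2}) at a vertex $j$ (still valid, since by Corollary~\ref{sign diff 2} the relations hold for every shift $n$) produces an identity among the degree-one elements $(\tau^-g^{(-1)}_{jk})\circ f_{jk}$ and $(\tau^-f_{ij})\circ(\tau^-g^{(-1)}_{ij})$ of $\Sigma'$; rewriting these as the $*$-products $(\tau^-g^{(-1)}_{jk})* f_{jk}$ and $f_{ij}* (\tau^-g^{(-1)}_{ij})$ and applying $\eta'$ yields precisely $-e_j\rho e_j$, the part of the preprojective relation based at $j$. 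Hence $\eta'$ sends the commutativity relations into $(\rho)$ and is well-defined, and the same computation read backwards shows that the map $\eta'^{-1}\colon\Pi\to\Sigma'$ given by $\alpha\mapsto\tau^-g^{(-1)}_{ij}$, $\alpha^*\mapsto f_{ij}$ kills $\rho$, so it too is well-defined by the ``if'' direction of Lemma~\ref{well def}. Checking that $\eta'$ and $\eta'^{-1}$ are mutually inverse is then immediate on generators.

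I expect the main obstacle to be the $\tau$-shift accounting forced by the twisted product $u* v=(\tau^{-t}u)\circ v$, which differs from the formula $u\times v=v\circ\tau^t u$ used for $\Sigma$. The crucial difference from Theorem~\ref{thm:alg isom} is that here the generator carrying $\alpha$ is the \emph{degree-one} element $\tau^-g^{(-1)}_{ij}$ rather than the degree-zero $f_{ij}$, so the roles of $\alpha$ and $\alpha^*$ are interchanged relative to $\eta$; one must confirm that pushing $g^{(-1)}_{ij}$ into $\Sigma'$ by $\tau^-$ does not disturb the relations, which it cannot because $\tau^n$ is an equivalence on $\mathcal T$. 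Once the shifts are matched correctly, every remaining step is formally identical to the proof of Theorem~\ref{thm:alg isom}.
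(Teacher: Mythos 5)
Your proposal is correct and is exactly the argument the paper intends: the paper offers no separate proof of Theorem~\ref{thm:alg isom2}, stating only that it follows by the ``similar argument'' dual to Theorem~\ref{thm:alg isom}, and your three steps (generators $f_{ij}$ and $\tau^-g^{(-1)}_{ij}$ of $\Sigma'$, well-definedness via Lemma~\ref{well def} with the $\tau^-$-shifted commutativity relations mapping to $-e_j\rho e_j$, and the directly constructed inverse) are precisely that dualization, with the $*$-product bookkeeping worked out correctly.
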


Combine these two isomorphisms, we have the following algebra isomorphism:

\begin{cor}
\label{cor:dual}
Let $Q$ be a tree-type quiver. Let $\Sigma$ and $\Sigma'$ be the algebras defined as before. Then we have the following algebra isomorphism:
\begin{eqnarray*}
\delta: \Sigma= \bigoplus_{i\geq 0} \Hom_{\mathcal T}(\tau^i \bk Q, \bk Q) &\longrightarrow& \Sigma'=\bigoplus_{i\geq 0} \Hom_{\mathcal T}(\bk Q, \tau^{-i} \bk Q), \text{ sending} \\
(id_{P_i})&\mapsto& (id_{P_i})\\
(f_{ij}: P_i \rightarrow P_j)  &\mapsto&(\tau^- g^{(-1)}_{ij}: P_j\rightarrow \tau^-P_i), \text{ and}  \\  
(g^{(-1)}_{ij}: \tau P_j \rightarrow P_i) & \mapsto& (f_{ij}: P_i \rightarrow P_j).
\end{eqnarray*}
\end{cor}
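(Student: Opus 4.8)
The plan is to avoid building $\delta$ from scratch and instead realize it as a composite of the two isomorphisms already established. By Theorem~\ref{thm:alg isom} we have an algebra isomorphism $\eta\colon \Sigma \to \Pi$, and by Theorem~\ref{thm:alg isom2} an algebra isomorphism $\eta'\colon \Sigma' \to \Pi$. Since $\eta'$ is invertible, I would simply set
$$\delta := (\eta')^{-1}\circ \eta \colon \Sigma \longrightarrow \Sigma'.$$
As a composite of algebra isomorphisms, $\delta$ is automatically an algebra isomorphism. This route is attractive because it sidesteps any direct verification that $\delta$ intertwines the two different multiplication conventions (namely $u\times v=v\circ \tau^t u$ on $\Sigma$ versus $u* v=(\tau^{-t}u)\circ v$ on $\Sigma'$): that compatibility is inherited through $\Pi$, where it has in effect already been checked twice.

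It then remains only to confirm that this $\delta$ acts on the distinguished generators as stated, which I would do by tracing each generator through $\eta$ and then through $(\eta')^{-1}$. From Theorem~\ref{thm:alg isom}, $\eta$ sends $id_{P_i}\mapsto e_i$, $f_{ij}\mapsto \alpha$ and $g^{(-1)}_{ij}\mapsto \alpha^*$; reading off Theorem~\ref{thm:alg isom2} backwards, $(\eta')^{-1}$ sends $e_i\mapsto id_{P_i}$, $\alpha\mapsto \tau^- g^{(-1)}_{ij}$ and $\alpha^*\mapsto f_{ij}$. Composing yields
$$\delta(id_{P_i}) = id_{P_i}, \qquad \delta(f_{ij}) = \tau^- g^{(-1)}_{ij}, \qquad \delta(g^{(-1)}_{ij}) = f_{ij},$$
exactly the formulas in the statement. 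To know that agreement on these generators pins down $\delta$ as an algebra map, I would recall that $\Sigma$ is generated as a $\bk$-algebra by the idempotents $id_{P_i}$ together with the $f_{ij}$ and $g^{(-1)}_{ij}$: every morphism in $\mathcal{T}$ is a sum of compositions of the irreducible morphisms in $\mathcal H$, and rewriting such a composition via the multiplication of $\Sigma$ (as in the proof of Theorem~\ref{thm:alg isom}) turns each factor $\tau^{-k}h$ with $h\in\mathcal H$ into precisely one of these degree-$0$ or degree-$1$ generators.

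The step I expect to require the most care is the bookkeeping of which element of which algebra a given symbol denotes. The same symbols $f_{ij}$ and $g^{(-1)}_{ij}$ are reused in $\Sigma$ and in $\Sigma'$ for morphisms sitting at different $\tau$-shifts in the AR-quiver, and neither $\eta$ nor $\eta'$ preserves the grading by powers of $\tau$ (indeed $\eta$ already carries the degree-$0$ element $f_{ij}$ to the length-$1$ path $\alpha$). Consequently the images under $(\eta')^{-1}$ must be read off literally from the defining formulas of $\eta'$ rather than inferred from degree considerations; in particular one must use the degree-$0$ element $f_{ij}\colon P_i\to P_j$ of $\Sigma'$, not a shift of it. Once these identifications are fixed, the verification is purely formal.

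As a sanity check, or as a more self-contained alternative, I would note that one can instead define $\delta$ directly by the three displayed formulas, extend it multiplicatively, and invoke Lemma~\ref{well def}: a relation $\sum_t a_t\, h_{t,m_t}\circ\cdots\circ h_{t,1}=0$ holds in $\Sigma$ if and only if it is generated by the commutativity relations, and the corresponding relation among the images holds in $\Sigma'$ for the same reason, giving both well-definedness and injectivity; surjectivity is immediate since the listed images generate $\Sigma'$. This reproves the result without reference to $\Pi$, but the composite description is cleaner and makes the commutativity of the square in the Results diagram transparent.
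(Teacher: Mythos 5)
Your proposal is correct and takes essentially the same approach as the paper: the corollary is stated there with the phrase ``Combine these two isomorphisms,'' i.e.\ the paper's $\delta$ is exactly your composite $(\eta')^{-1}\circ\eta$ of the isomorphisms from Theorems~\ref{thm:alg isom} and~\ref{thm:alg isom2}. Your generator-tracing, the generation argument for $\Sigma$ via Lemma~\ref{well def}, and the caution about reading $f_{ij}$ in $\Sigma'$ as the degree-$0$ element $P_i\to P_j$ simply fill in details the paper leaves implicit.
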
  
 
\noindent 
So we have 4 equivalent descriptions of the preprojective algebra of a tree-type quiver $Q$: 

\begin{cor}
\label{cor:equivalent}
For a tree-type quiver $Q$, let $\bk Q$ and $\mathcal T$ be as before. Then the followings are isomorphic as algebras: 
\begin{enumerate}
 \item preprojective algebra $\Pi= \bk\overline{Q}/ \left(\mathop\sum\limits_{\alpha \in Q_1} (\alpha^*\alpha-\alpha\alpha^*) \right)$
 \item $\Ext$-tensor algebra $\bk Q\langle\Omega\rangle$, where $\Omega := \Ext^1_{\bk Q}(D(\bk Q),\bk Q)$ 
 \item orbit algebra $\Sigma':=\displaystyle \bigoplus_{i\geq0} \Hom_{\bk Q}(\bk Q,\tau^{-i}\bk Q)$
 \item $\Sigma:= \displaystyle \bigoplus_{i\geq 0} \Hom_{\mathcal T}(\tau^i \bk Q, \bk Q)$.
\end{enumerate}
\end{cor}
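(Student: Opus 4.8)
The plan is to observe that Corollary~\ref{cor:equivalent} is essentially a bookkeeping statement: all of the substantive work has already been done in the results above, and what remains is to check that the four algebras are linked by a chain of the isomorphisms already established. First I would list the available links. The isomorphism $\Pi \simeq \bk Q\langle\Omega\rangle$ of Ringel and Crawley-Boevey, recalled in the text, connects (1) and (2). Proposition~\ref{bgl 3.1} of Baer-Geigle-Lenzing gives $\bk Q\langle\Omega\rangle \simeq \Sigma'$, connecting (2) and (3). Corollary~\ref{cor:dual} provides the algebra isomorphism $\delta\colon \Sigma \to \Sigma'$, connecting (3) and (4). Theorem~\ref{thm:alg isom} provides $\eta\colon \Sigma \to \Pi$, connecting (4) and (1).

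Next I would note that isomorphism of algebras is an equivalence relation, so it suffices to exhibit a single chain visiting all four vertices. For instance $(2)\simeq(3)$ by Proposition~\ref{bgl 3.1}, $(3)\simeq(4)$ by Corollary~\ref{cor:dual}, and $(4)\simeq(1)$ by Theorem~\ref{thm:alg isom} already connects all four descriptions, so by transitivity they are mutually isomorphic. The remaining links, namely $\eta'$ of Theorem~\ref{thm:alg isom2} giving $\Sigma'\simeq\Pi$ and the Ringel/Crawley-Boevey isomorphism $(1)\simeq(2)$, are then redundant for the corollary itself, though they are what make the square in the introduction commute.

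The only point that is not purely formal is to make sure each cited map is an isomorphism of \emph{algebras}, compatible with the specific multiplications placed on $\Sigma$, $\Sigma'$, $\bk Q\langle\Omega\rangle$, and $\Pi$, rather than a mere $\bk$-linear or module isomorphism. This is guaranteed at each step: Theorem~\ref{thm:alg isom} and Corollary~\ref{cor:dual} verify multiplicativity of $\eta$ and $\delta$ directly, and Proposition~\ref{bgl 3.1} (through Lemmas~\ref{bgl s3} and \ref{lem:tensor isom}) handles the tensor-algebra identification. Consequently I expect no genuine obstacle here: the corollary is a formal consequence of the preceding results, and the proof amounts to assembling these isomorphisms into a connected diagram.
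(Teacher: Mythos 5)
Your proposal is correct and matches the paper's own (implicit) argument: the corollary is stated as an immediate consequence of Theorem~\ref{thm:alg isom}, Corollary~\ref{cor:dual}, and Proposition~\ref{bgl 3.1}, exactly the chain of algebra isomorphisms you assemble, with transitivity doing the rest. Your added remark that each cited map is genuinely an isomorphism of algebras (not merely of $\bk$-vector spaces or modules) is the right point to flag, and it is indeed covered by the cited results.
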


\begin{exm} \normalfont
Let $Q$ be a quiver of type $A_3$:
$\xymatrix{1&2\ar[l]_\alpha&3\ar[l]_\beta}.$ Then its preprojective algebra $\Pi$ is
$
\xymatrix{1\ar@/_/[r]_{\alpha^*}&2\ar@/_/[l]_\alpha\ar@/_/[r]_{\beta^*}&3\ar@/_/[l]_{\beta}}
$,
with relations $\alpha\alpha^*=0$, $\beta^*\beta=0$, and $\alpha^*\alpha=\beta\beta^*$. 

\noindent In order to describe the algebra isomorphisms $\eta, \eta'$ and $\delta$, we need to label the irreducible morphisms in the AR-quiver of $Q$ in the following way:
$$
\xymatrixrowsep{10pt}
\xymatrixcolsep{15pt}
\tiny\xymatrix
{
P_3[-1]\ar[dr]&&S_2[-1]\ar[dr]&&I_3[-1]\ar[dr]^{g_{23}}&&P_3\ar[dr]^{\tau^-g_{23}}\\
&P_2[-1]\ar[ur]\ar[dr]&&I_2[-1]\ar[ur]^{\tau f_{23}}\ar[dr]^{g_{12}}&&P_2\ar[ur]^{f_{23}}\ar[dr]^{\tau^-g_{12}}&&I_2\ar[dr]\\
&&I_1[-1]\ar[ur]^{\tau f_{12}}&&P_1\ar[ur]^{f_{12}}&&S_2\ar[ur]&&I_3
 }
$$


Because the preprojective algebra of $Q$ is finite dimensional, isomorphisms $\eta: \Sigma \rightarrow \Pi$, $\eta': \Sigma' \rightarrow \Pi$, and $\delta: \Sigma \rightarrow \Sigma'$ give corresponding bijections in the following table. These isomorphisms send idempotents to idempotents, preserving the left projective modules.

\begin{center}
\begin{tabular}{c|p{1.6cm}|p{6.5cm}|p{2.5cm}}
 & $\ \ \ \Pi e_1$ & $\begin{matrix} &&\ \ \ \ \ \ \ \ \ \ \  \ \ \ \Pi e_2\end{matrix}$ &$\ \ \ \ \Pi e_3$\\
\hline

$\Pi$ & $\begin{matrix} e_1\\ \alpha^* \\ \beta^*\alpha^*\end{matrix}$ &$\begin{matrix}&&&&&e_2\\ \alpha&&&&&&&&&\beta^*\\ &&&&&\alpha^*\alpha=\beta\beta^*\end{matrix}$ &$\begin{matrix}&&e_3\\ &&\beta\\ &&\alpha\beta\end{matrix}$\\
\hline

$\Sigma$ &$\begin{matrix} id_{P_1}\\ g_{12}\\ g_{23} \times g_{12}\end{matrix}$ &$\begin{matrix}&&id_{P_2}\\ f_{12} &&&& g_{23} \\ &&g_{12} \times f_{12}=f_{23} \times g_{23}\end{matrix}$ &$\begin{matrix}&id_{P_3}\\ &f_{23}\\ &f_{12} \times f_{23} \end{matrix}$ \\
\hline
 
$\Sigma'$ &$\begin{matrix} id_{P_1}\\ f_{12}\\ f_{23}*f_{12}\end{matrix}$ &$\begin{matrix}&id_{P_2}\\ \tau^-g_{12} && \hspace{-0.4cm} f_{23} \\ &f_{12} * \tau^-g_{12}=\tau^-g_{23}*f_{23}\end{matrix}$ &$\begin{matrix}&id_{P_3}\\ &\tau^-g_{23}\\ &\tau^-g_{12}*\tau^-g_{23}\end{matrix}$ \\
\hline
\end{tabular}
\end{center}
\end{exm}


\end{document}